

\documentclass[EJP]{ejpecp} 






\SHORTTITLE{Three-velocity coalescing ballistic annihilation}

\TITLE{Three-velocity coalescing ballistic annihilation} 



\AUTHORS{%
 Luis Benitez\footnote{University of Notre Dame.
    \EMAIL{lbenite3@nd.edu}}
  \and 
  Matthew Junge\footnote{Baruch College. \BEMAIL{Matthew.Junge@baruch.cuny.edu} }
  \and 
  Hanbaek Lyu\footnote{University of Wisconsin. \BEMAIL{hlyu@math.ucla.edu} }
  \and 
  Maximus Redman\footnote{Bard College. \BEMAIL{mr3274@bard.edu} }
  \and 
  Lily Reeves\footnote{Cornell University. \BEMAIL{zw477@cornell.edu} }
  }



\KEYWORDS{Phase transition ; Interacting particle system ; Statistical physics} 

\AMSSUBJ{60K35} 

\SUBMITTED{September 6, 2022} 
\ACCEPTED{April 17, 2023} 




\VOLUME{28}
\YEAR{2023}
\PAPERNUM{56}
\DOI{10.1214/23-EJP948}


\ABSTRACT{Three-velocity ballistic annihilation is an interacting system in which stationary, left-, and right-moving particles are placed at random throughout the real line and mutually annihilate upon colliding. We introduce a coalescing variant in which collisions may generate new particles. For a symmetric three-parameter family of such systems, we compute the survival probability of stationary particles at a given initial density. This allows us to describe a phase-transition for stationary particle survival.}

\usepackage{enumitem}
\usepackage{mathtools}
\usepackage{comment}
\mathtoolsset{showonlyrefs}

\newcommand{\Z}{\mathbb{Z}}

\newcommand{\E}{\mathbb{E}}
\renewcommand{\P}{\mathbb{P}}

\newcommand{\f}{\frac}
\newcommand{\lrl}{\longleftrightarrow}
\newcommand{\lrlf}{\overset{1}{\longleftrightarrow}}
\newcommand{\lrlb}{\overset{\h}{\longleftrightarrow}}
\newcommand{\ra}{\rightarrow}

\newcommand{\la}{\leftarrow}

\newcommand{\Laf}{\overset{1}{\Leftarrow}}

\newcommand{\ind}[1]{\mathbf 1 \{#1 \}}
\renewcommand{\emptyset}{\varnothing}
\renewcommand{\phi}{\varphi}

\usepackage{hyperref}
\hypersetup{colorlinks}
\newcommand{\HL}[1]{{\color{red}{#1}}}

\newcommand{\cev}[1]{\reflectbox{\ensuremath{\vec{\reflectbox{\ensuremath{#1}}}}}}
\renewcommand{\b}{\bullet}
\renewcommand{\L}{\cev{\b}}
\newcommand{\R}{\vec{\b}}
\newcommand{\B}{\dot{\b}}
\newcommand{\h}{\hat{\b}}
\newcommand{\BB}{\mathcal B}
\renewcommand{\AA}{\mathcal A}
\newcommand{\p}{\mathfrak p}

\usepackage{theoremref}

\begin{document}



\section{Introduction}

In \emph{ballistic annihilation}, particles are placed throughout the real line with independent and identically distributed spacings sampled from a continuous distribution. Each particle is assigned a velocity at which it moves from the onset. When particles collide, they mutually annihilate and are removed from the system. While a variety of velocity distributions have been studied \cite{anti-particle,b5,  b1, b9}, a standard way to assign velocities is independently from $\{-1,0,1\}$ where velocity 0 is assigned with probability $p\in [0,1)$, and velocities $\pm1$ symmetrically with probability $(1-p) /2$. 
We will refer to this system as \emph{symmetric three-velocity ballistic annihilation} (BA).  Particles with velocity 0 will be referred to as \emph{blockades} and those with velocities $\pm1$ as \emph{arrows}. When necessary, we further specify the direction of an arrow as \emph{right} ($+1$) and \emph{left} ($-1$). In this present work, we extend BA dynamics to include collisions that sometimes generate new particles (see \eqref{eq:rules}).

Many intriguing features of BA were inferred by physicists in the 1990s \cite{b4, b8}. Several decades later the papers \cite{ST, BGJ, bullets, bullets2} brought renewed attention to the problem. Recently, a mathematical approach was developed by Haslegrave, Sidoravicius, and Tournier to rigorously justify these inferences \cite{HST}. Several basic quantities in ballistic annihilation continue to evade analysis. A major difficulty is that the order in which collisions occur is sensitive to perturbations; changing the velocity of a single particle can have a cascading effect. This makes it difficult to couple processes with different parameters and to prove continuity of basic statistics.  For example, the ideas from \cite{HST} have so far only been partially extended to asymmetric three-velocity ballistic annihilation in which left and right arrows occur with different probabilities \cite{JL, HT}. 
Seeking to better understand the reach, as well as limits, of the approach in \cite{HST}, we generalize their result concerning the location of the phase transition in BA to coalescing systems. Followup work concerning universality of the phase transition in coalescing ballistic annihilation can be found in \cite{cruzado2021arrivals}.

\subsection{Notation and key quantities} 
For each nonnegative integer $k$ we let $\b_k$ represent the $|k|$th particle to the right or left of the origin ($k>0$ for right and $k<0$ for left) whose initial location is denoted by $x_k \in \mathbb R$. We set $x_0 = 0$ and sample $x_k$ so that the $(x_k - x_{k-1})$ are independent according to a continuous distribution with support contained in $(0,\infty)$. Then for any integer $m\in \mathbb{Z}$, $(x_{i+m})_{i\in \mathbb{Z}}$ has the same law as $(x_{i})_{i\in \mathbb{Z}}$. Particle types are then assigned independently at each $x_k$. The process on the whole line $\mathbb{R}$ is invariant under translating the particle indices. We will frequently refer to the events
\begin{align*}
    \B_k &= \{\b_k \text{ is a blockade}\},   
    \R_k = \{\b_k \text{ is a right arrow}\}, \text{ and }
    \L_k = \{\b_k \text{ is a left arrow}\}.
\end{align*}
Collision events and visits to a location $ u \in \mathbb R$ are specified by
\begin{align}
    \{\b_j \lrl \b_k\} &= \{\b_j \text{ and } \b_k \text{ mutually annihilate}\} \\
    \{ u \la  \b\} &= \{ u \text{ is visited by a particle  from the right}\}.
\end{align}
{Note that the collision event $\{\b_j \lrl \b_k\}$ depends only on finitely many initial particles in the interval containing $[x_{j}, x_{k} ]$ as its middle third, so it is well-defined. Consequently, the events $\{\b_{j}\leftrightarrow \b \}:=\cup_{k>j} \{\b_j \lrl \b_k\}$ are also well-defined.} Note that we count an arrow destroying a blockade as also visiting the site housing the blockade, so $\{\B_k \la \b\}\subseteq {\{x_k \la \b\}}$. 
It is sometimes advantageous to restrict to the system with only the particles started in a specified interval $I \subseteq \mathbb R$. We notate this restriction by including $I$ as a subscript on the event, for example, $(\B_j \lrl \L_k)_{[x_j, x_k]}$ is the event that $\b_j$ is a blockade which mutually annihilates with a left arrow started at $x_k$ when restricted to only the particles in $[x_j,x_k]$. 

A fundamental statistic associated to BA is the probability $\theta(p)$ that the origin is not visited conditional on $\B_0$.  We define this quantity formally as
\begin{align}
\theta(p):=(1-q)^{2},\qquad   q=q(p) := \P_p( (0 \leftarrow \b)_{(0, \infty)}). 
\label{eq:theta}
\end{align}
The Birkhoff Ergodic Theorem ensures that the limiting density of surviving blockades is $p \theta(p)$.
Unless stated otherwise, all of the events  we consider hereafter are one-sided on $(0,\infty)$. Accordingly, we drop the subscript $(0,\infty)$ from our event notation. 

Physicists inferred that $\theta$ undergoes a phase transition as the initial density of blockades is varied \cite{b8}. Formally, we define the critical values
\begin{align}
p_c^- = \inf\{ p \colon \theta(p) >0\} \qquad \text{ and } \qquad  p_c^+ = \sup \{ p \colon \theta(p) =0 \}.
\end{align}
Despite being intuitively plausible, there is no known coupling that proves $\theta$ is increasing in $p$. Thus, $p_c^-$ and $p_c^+$ may not coincide. However, when they do, we denote the location of the phase transition by $p_c$. 
It was deduced in \cite{b4} and further supported by calculations in \cite{b8} that for BA it holds that $p_c = 1/4$. The breakthrough from Haslegrave, Sidoravicius, and Tournier provided, among many remarkable results, a rigorous probabilistic proof that $p_c =1/4$ \cite{HST}. 

\subsection{Coalescing ballistic annihilation}
We consider ballistic motion in which collisions sometimes result in the generation of new particles. This is inspired by earlier work from physicists \cite{b3, b7, blythe2000stochastic}. However, none of these works considered the three-velocity setting. We remark that \cite{HST} allowed for a coalescence rule in which a particle is selected uniformly at random to survive a triple collision. The primary reason for considering this case was to resolve technical difficulties that arise in the presence of triple collisions, rather then investigate coalescence dynamics. Note that when the spacings between particles are sampled from an atomless distribution, there are almost surely no triple collisions.

We require more notation to describe coalescing systems. The initial conditions with particles $\b_k$ at $x_k$ for $k \in \mathbb Z$ assigned velocities from $\{-1,0,1\}$ remain unchanged. We denote two particles meeting at the same location by $\b_j - \b_k$. Upon meeting, a reaction takes place. Either the particles coalesce and form a new particle with an independently sampled new velocity, or the particles mutually annihilate. 

In general, there are three types of collisions: $\R - \B$, $\B - \L$, and $\R - \L$. 
Each collision may result in one of four reactions: generating a left arrow, right arrow, blockade, or mutual annihilation (denoted by $\emptyset$). 

We define the \emph{three-parameter coalescing ballistic annihilation} (TCBA) covered by our results.
Fix parameters $0\leq a,b,\alpha <1$ with $a+b\leq 1$. Using the notation 
\begin{align*}
    \b - \b \implies \Theta, \quad x
\end{align*}
to denote a collision resulting in an outcome $\Theta \in \{ \B, \R, \L, \emptyset \}$ with probability $x$, we have the following collision rules:

\begin{minipage}{.5\linewidth}
\begin{equation}
    \R - \L  \implies 
    \begin{cases}
        \L, &  a/2\\
        \R, &  a/2 \\
        \B, &  b \\
        \emptyset, & 1- (a+b)
    \end{cases}
\end{equation}
\end{minipage}%
\begin{minipage}{.5\linewidth}
\begin{align}
    \B - \L &\implies  
    \begin{cases} 
        \L, &  \alpha\\
        \emptyset, & 1-\alpha
    \end{cases}\\
    \R - \B &\implies  
        \begin{cases} 
            \R, &  \alpha\\
            \emptyset,& 1-\alpha
    \end{cases}. \label{eq:rules}
\end{align}
\end{minipage}
So TCBA allows for arrows to survive collisions with blockades and other arrows or to generate a blockade after colliding with an arrow. Note that BA is the special case $a=b=\alpha=0$. 

For all but the $\R - \L \implies \B$ reaction, it is mathematically equivalent to view coalescence as one of the particles surviving the collision. Taking this perspective, we have the head of the arrow point to the particle that is destroyed. For example, $\R_j \to \L_k$ denotes the event that the left arrow started at $x_k$ is destroyed by the right arrow started at $x_j$. The survived particle is still denoted by $\R_j$. If mutual annihilation occurs, then we continue to write $\b \lrl \b$. We denote the case in which two arrows collide and generate a blockade by $\R_m \lrlb \L_n$. We denote blockades generated from such collisions by $\h_{m,n}$, and denote a generic blockade generated from such a reaction by $\h$. 



Recall that $(0 \la \b)$ is implicitly restricted to the positive real line. For TCBA, we define $q = q(a,b,\alpha,p) := \P(0 \la \b)$ and $\theta$ as in \eqref{eq:theta}:
\begin{align} 
\theta = \theta(a,b,\alpha,p) := (1-q)^2.\label{eq:theta_consideration}
\end{align}
We define $p_c = p_c(a,b,\alpha)$ as the value of $p_c^- = p_c^-(a,b,\alpha) :=\inf \{ p \colon q(a,b,\alpha,p)<1\}$ and $p_c^+ = p_c^+ (a,b,\alpha) := \sup \{ p \colon q(a,b,\alpha,p) = 1\}$ when they coincide. 
Our main result gives formulas for $q$ and $p_c$.

\begin{theorem} \thlabel{thm:sc}
For any TCBA it holds that 
\begin{align}
p_c = p_c(a,b,\alpha) &= \frac{1 - b (1-\alpha)}{4 - 3\alpha  - (a+b)(1-\alpha)}
\label{eq:pc}
\end{align}
with $q(p) = 1$ for $p \leq p_c$ and 
\begin{align}
q(p) = \frac{\sqrt{(1-\alpha) \left(b (1-p)^2- p (a (1-p)+p \alpha-1)\right)}-p (1-\alpha)}{(1-\alpha) ((1-a) p+b (1-p))}\label{eq:q}
\end{align}
for $p > p_c$.
\end{theorem}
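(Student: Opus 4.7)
My plan is to derive a quadratic self-consistency equation for $q$ by conditioning on $\b_1$, the nearest particle to the right of the origin, and then solve it. The formulas \eqref{eq:pc} and \eqref{eq:q} will drop out respectively as the critical case $q=1$ and the relevant branch of the quadratic formula.

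First, I condition on the type of $\b_1$ to write
\[
q \;=\; \tfrac{1-p}{2} \;+\; p\,P_B \;+\; \tfrac{1-p}{2}\,P_R,
\]
where $P_B = \P((0 \la \b)_{(0,\infty)} \mid \b_1 = \B)$ and $P_R$ is defined analogously; the $\b_1 = \L$ case is immediate since the left arrow reaches the origin unobstructed. For $P_B$, by translation invariance the probability that some left arrow emerges at the left boundary $x_1$ of the restricted system on $(x_1,\infty)$ equals $q$. Upon arrival, the $\B - \L$ reaction produces $\L$ with probability $x$ (which reaches the origin) and $\emptyset$ with probability $1-x$ (both die, freeing $x_1$); in the latter case we still need a subsequent left arrow to traverse $x_1$, an event whose probability I would re-express as a $q$-like factor by analyzing the post-collision residual subsystem and invoking translation invariance. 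Summing over the two outcomes and any secondary passages yields $P_B$ as an algebraic expression in $q$ and $x$.

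For $P_R$, the right arrow $\b_1$ enters $(x_1,\infty)$ and either escapes or collides with a left arrow or blockade from the restricted system. Each possible outcome of the resulting $\R - \L$ or $\R - \B$ reaction either creates a fresh left arrow at an interior position (which by translation invariance reaches $0$ with probability $q$), creates a new blockade at an interior position (triggering a copy of the $\b_1 = \B$ analysis on a shifted coordinate), creates a new right arrow (iterating the argument), or yields mutual annihilation. Summing over first-collision outcomes and re-seeding sub-problems via translation invariance produces $P_R$ as a polynomial in $q$ with coefficients depending on $a,b,x$. Combining with the decomposition from the previous paragraph yields the quadratic
\[
(1-x)\bigl[(1-a)p + b(1-p)\bigr]\,q^2 \;+\; 2p(1-x)\,q \;-\; (1-p) \;=\; 0,
\]
whose root in $[0,1]$ is exactly \eqref{eq:q} when $p$ is above threshold. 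Setting $q=1$ (the boundary between the two regimes) and solving for $p$ recovers \eqref{eq:pc}. For $p \le p_c$ one shows $q=1$ separately via a monotonicity/domination argument against suitable BA comparisons.

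The main obstacle will be the reductions for $P_B$ and $P_R$: because of coalescence, the configuration after a collision is not obviously a translated copy of the original restricted system, and each re-seeded particle (e.g.\ via $\R - \L \Rightarrow \L$ or $\R - \L \Rightarrow \B$) must be shown to behave probabilistically as a fresh instance of the analogous sub-problem on a shifted interval. Carrying out this reduction rigorously, enumerating all combinatorial coalescence outcomes, and selecting the correct root of the quadratic (verifying that the chosen branch lies in $[0,1]$ precisely for $p > p_c$) is the technical core of the proof.
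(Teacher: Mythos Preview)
Your strategy of conditioning on the type of $\b_1$ matches the paper's, and the quadratic you write down coincides with the numerator of the paper's function $g$. But there are two real gaps.

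The more serious one: a correct derivation of the self-consistency relation yields the \emph{cubic} $(1-q)\,g(p,q)=0$, not just your quadratic, so $q=1$ is a solution for every $p$. (Indeed, for $p<p_c$ your quadratic has no root in $[0,1]$, which would already be a contradiction if it were the full identity.) Consequently, for $p>p_c$ you must actively \emph{exclude} $q=1$; merely ``verifying that the chosen branch lies in $[0,1]$'' does not do this, since $q$ could a priori jump between $1$ and $q_+$ as $p$ varies. The paper devotes an entire section to this step: it shows that $\{p\in(p_c,1):q(p)<1\}$ is nonempty, open, and closed in $(p_c,1)$, via a superadditivity lemma (using a ``quiver'' weighting of sharp arrows to cope with arrows that can destroy several blockades) and an if-and-only-if criterion for blockade survival in terms of a finite expectation. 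Your proposed ``monotonicity/domination argument against suitable BA comparisons'' will not work here; the paper explicitly notes that no coupling establishing monotonicity of $q$ in $p$ is known, and this lack of monotonicity is exactly why the topological argument is required.

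Second, your sketch for $P_R$ misses the key technical device. When $\R_1$ is ultimately annihilated by a blockade $\B_k$ (or by a generated blockade $\h$), that event depends on the timing of left arrows arriving from $(x_k,\infty)$, so the residual system past $x_k$ is \emph{not} conditionally a fresh translated copy and the naive renewal you invoke fails. The paper computes these contributions (the quantities $s$ and $r$) via a mass transport principle---swapping a sum over the target index for a sum over the source index---combined with a symmetry argument comparing first-arrival times from the left and right one-sided processes; this is the origin of the factor $\tfrac12$ in the formula for $s$. A generic appeal to ``translation invariance'' does not produce this, and your description of the $\R-\L\Rightarrow\L$ case (``reaches $0$ with probability $q$'') is also off: since $\R_1$ has cleared everything to its left, that new left arrow reaches $0$ with probability $1$.
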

This generalizes \cite[Theorem 1]{HST} in which the formula $q(p) = p^{-1/2} -1$ for $p \geq 1/4$ and otherwise $q(p)=1$ is established. 
 Given the notorious sensitivity of BA to perturbation, it is noteworthy that we can describe systems whose local behavior is markedly different from BA.  The form of \eqref{eq:pc} illustrates how the location of the phase transition depends in a subtle way on the coalescence rules. This suggests that it would be difficult to infer $p_c$ from heuristic arguments such as those given in the introductions of \cite{b4, BGJ} for BA. Note that there are parameter choices that result in arbitrarily small and large values of $p_c$. See Figure \ref{fig:q} for a depiction of the function $q$ for various parameter choices. 
 
 Although \thref{thm:sc} extends the main result from \cite{HST} to ballistic systems with coalescence, it is still restricted to systems with reflection symmetry. For asymmetric three-velocity ballistic annihilation without coalescence---for example, left and right particles have different speeds or probabilities of occurring---universal bounds for $p_{c}$ were obtained in \cite{JL}. However, it seems to be difficult to establish a sharp phase transition and/or formulas for $p_{c}$ and $q(p)$ in asymmetric cases. The lack of reflection symmetry introduces a number of additional quantities to be solved explicitly. Another interesting extension is to allow blockades to survive collisions (e.g., $\B - \R \implies \B$) or particle moving in the opposite direction of the reactant is generated (e.g., $\B - \R \implies \L$). Such cases do not appear to offer the same sort of renewal as the cases we cover in \thref{thm:sc}. 
 This is discussed more in \thref{rem:block}. After posting an earlier version of our article, a solution to the process in which blockades survive multiple collisions was found in \cite{junge2022non}.

\begin{figure}
    \centering
    \includegraphics[width = .6 \textwidth]{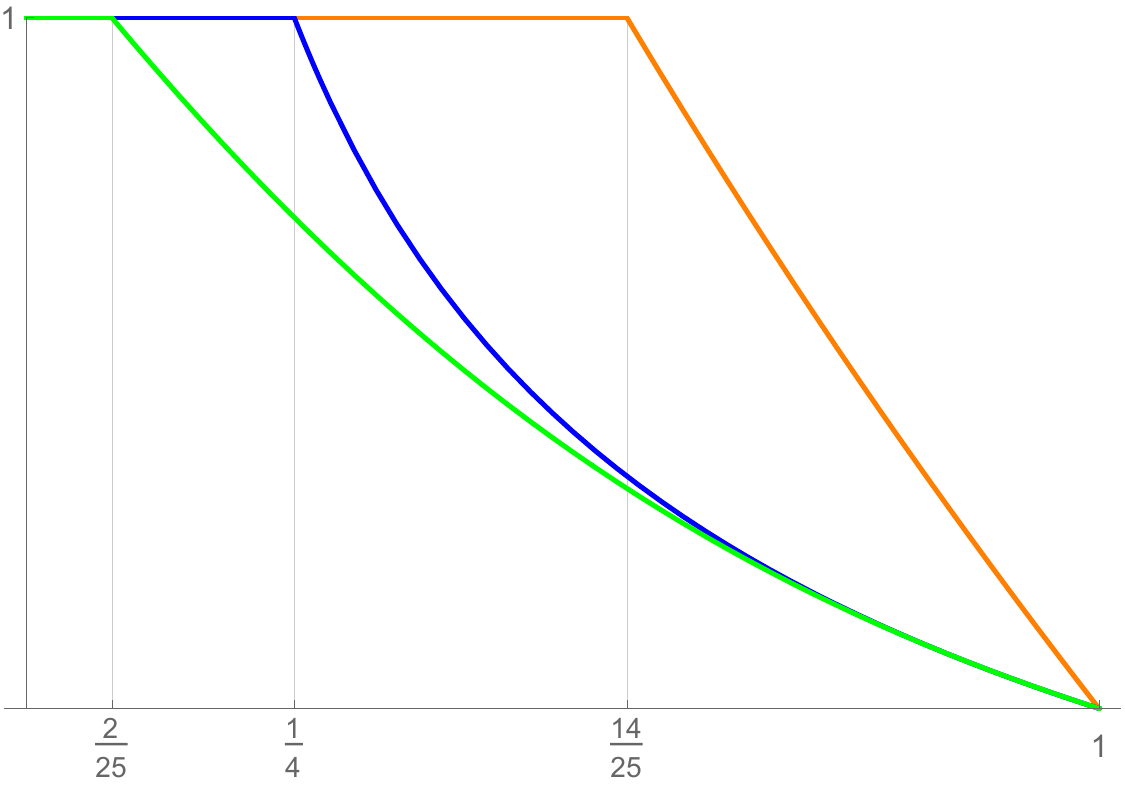}
    \caption{Plots of the formula for $q(p)$ from \thref{thm:sc} for different parameter choices $(a,b,\alpha)$. The horizontal axis is $p$, and the vertical axis is the probability. The green curve is for $(1/8,3/4,0)$ which gives $p_c = 2/25$. The blue curve is for $(0,0,0)$, so the usual BA, which has $p_c=1/4$. The orange curve is $(1/4,1/2,3/4)$ which gives $p_c = 14/25$.}
    \label{fig:q}
\end{figure}

\subsection{Proof methods}
The high-level idea for establishing the phase transition is that first we derive an equation involving only $q, a, b, \alpha$, and $p$. Then, we analyze the roots (in $p$) of this equation to find $p_c$ and show that there is indeed a phase transition.

Our first step is deriving the identity 
    \begin{align}
    0 = (1-q)g(p,q)\label{eq:id}
    \end{align} 
in \thref{prop:q} for an explicit function $g$. To obtain the identity we partition $q$ in terms of the velocity assigned to $\b_1$. Recall that all events below are restricted to the one-sided process on $(0,\infty)$:
    \begin{align}
    q &= \P((0 \leftarrow \b) \wedge \L_1) + \P((0 \leftarrow \b) \wedge \B_1) + \P((0 \leftarrow \b) \wedge \R_1) \label{eq:q_rec}.
    \end{align}
In \thref{lem:partition}, each of these three terms is expanded into a formula involving the parameters. These formulas are derived by further partitioning on the type of particle that destroys $\b_1$ and then observing some form of renewal. For example, the interdistances and types of particles started in $(x_j,\infty)$ are conditionally independent of the event $(\B_1 \lrl \L_j)$. Thus, the probability the origin is visited at least once is again equal to $q$.

While in the same spirit as the proof of \cite[Proposition 5]{HST}, where the identity $q = 1- p( 1+q -q^2 -q^3)$ is derived, the coalescing case is more complicated.  An important tool is a mass transport principle (\thref{prop:mtp}) derived from translation invariance. The use of the mass transport principle is inspired by what was done in \cite{JL}. Note that Haslegrave and Tournier \cite{HT} also found an application of the mass transport principle for computing a moment generating function associated to BA.   

The identity at \eqref{eq:id} implies that $q$ is either equal to $1$ or a root of $g$. Let $p_*$ be the claimed value of $p_c$ in \thref{thm:sc}. It is straightforward to prove that the roots of $g$ fall outside of $[0,1]$ for $p \in [0, p_*)$, and so $q=1$ on that interval. Moreover, at $p=p_*$ the only positive root of $g$ is equal to $1$. So we additionally have $q(p_*)=1$. Recall that there is no known direct proof that $q$ is continuous. On the interval $p\in(p_*,1)$, we cannot easily rule out the possibility that $q$ jumps between $1$ and a root of $g$. To rule out this pathology, we generalize an approach from \cite{HST} and prove that the set $I=\{ p\in (p_*,1) \colon q(p) <1\}$ is both open and closed in the subspace topology on $(p_*,1)$. These observations along with the fact that $I$ is nonempty (\thref{lem:pc+}) imply that $I = (p_*,1)$ and thus $q$ must be equal to the unique positive root of $g$ on this interval. 

The proof that $I^c$ is open follows what was done in \cite{HST}; we approximate $q$ from below with continuous functions $q_k =\P( (0 \la \b)_{(0,x_k]})$ to write $I^c$ as a union of sets which are open because of an explicit comparison to the nonnegative root of $g$. 
The proof that $I$ is open relies on a necessary and sufficient condition for blockades to survive with positive probability that is similar to what was done in \cite{HST}. Coalescence again introduces new challenges. \cite[Lemma 12]{HST} concerns showing that the net number of surviving blockades versus left arrows is superadditive when combining processes on adjacent intervals. This no longer holds with TCBA since arrows can destroy multiple blockades. We introduce a weighting scheme to account for this in \thref{lem:sa}. The argument concludes by showing that $q(p) <1$ if and only if there are on average more surviving blockades than arrows in our weighting scheme. This condition is different and a little more efficient than what was used for the analogue in \cite{HST}. See \thref{rem:sa}. We use this condition to prove that $I$ is open in \thref{lem:open} and that $I$ is nonempty in \thref{lem:pc+}. 

\subsection{Determining firstness} \label{sec:quiver}
Since arrows may survive multiple collisions, we give more detail concerning how reactions are decided. Arrow-arrow collisions are decided in some generic way that is compatible with \eqref{eq:rules} and consistent when combining the process restricted to different intervals. For example, left arrows carry a queue of instructions for what reaction occurs for each right arrow they meet. An upcoming lemma (\thref{lem:sa}) requires particular care with how arrow-blockade reactions are decided. To facilitate the presentation of the proof of \thref{lem:sa}, we associate to each arrow $\b_i$ a \emph{quiver} of $\sigma_i$ \emph{sharp arrows}. Each sharp arrow represents one blockade that $\b_i$ is able to destroy. Accordingly, $\sigma_i \sim \text{Geometric}_1(1-\alpha)$ is distributed as a geometric random variable supported on $1,2,\hdots$ with success parameter $1-\alpha$. 

Each arrow in the quiver has the same direction and position as $\b_i$. If $\b_i$ is destroyed by another arrow, then all arrows in the quiver are destroyed as well. Whenever $\b_i$ meets a blockade, one of the sharp arrows from its quiver mutually annihilates with the blockade. When the last sharp arrow in the quiver of $\b_i$ is destroyed, so is $\b_i$.  It is straightforward to verify that the quiver formulation for deciding collisions is equivalent to TCBA according to \eqref{eq:rules}.

Lastly, it will be necessary to distinguish whether the first arrow that visits a location would destroy and survive, or mutually annihilate with the next blockade it meets. Accordingly, for $u \in \mathbb R$, we introduce the events
\begin{align} 
    \{u \Laf \b \}& := \left\{ \begin{matrix} \text{$u$ is first visited by an arrow whose quiver} \\ \text{contains at least two sharp arrow} \end{matrix} \right\} \\
    \{u \lrlf \b \}& := \left\{ \begin{matrix} \text{$u$ is first visited by an arrow whose quiver} \\ \text{contains exactly one sharp arrow} \end{matrix} \right\}.
\end{align} 

\subsection{Organization}
In Section \ref{sec:recursion}, we prove a mass transport principle for TCBA. We then use this to derive a recursive formula for $q$ which leads to the identity at \eqref{eq:id} in \thref{prop:q}. Section \ref{sec:g} contains additional information about $g$ and its roots. Section \ref{sec:regularity} proves regularity of the set $I = \{ p \in (p_*,1) \colon q(p) <1\}$. Finally, Section \ref{sec:main} combines the other sections to give a short proof of \thref{thm:sc}.

\section{Recursion} \label{sec:recursion}
We begin by stating and proving the two main tools for obtaining a recursive expression for $q$.

\begin{proposition}[Mass Transport Principle] \thlabel{prop:mtp}
    Consider a family of non-negative random variables Z(m,n) for integers $m,n \in \Z$ such that its distribution is diagonally invariant under translation, i.e., for any integer $\ell$, $Z(m+\ell,n+\ell)$ has the same distribution as $Z(m,n)$. Then for each $m \in \Z:$
    \begin{align}
        \E \sum\limits_{n \in \Z} Z(m,n) = \E \sum\limits_{n \in \Z} Z(n,m) 
    \end{align}
\end{proposition}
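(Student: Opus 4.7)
The plan is to reduce both sums to a common quantity, namely $\sum_{k \in \mathbb{Z}} \mathbb{E} Z(0,k)$, using only nonnegativity (to apply Tonelli's theorem) and the diagonal translation invariance hypothesis. The argument should be short and essentially computational; there is no significant obstacle, since the statement is the standard mass transport principle specialized to $\mathbb{Z}$.

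First I would interchange expectation and summation. Because $Z(m,n) \geq 0$ almost surely, Tonelli's theorem gives
\begin{equation}
\mathbb{E} \sum_{n \in \mathbb{Z}} Z(m,n) = \sum_{n \in \mathbb{Z}} \mathbb{E} Z(m,n), \qquad \mathbb{E} \sum_{n \in \mathbb{Z}} Z(n,m) = \sum_{n \in \mathbb{Z}} \mathbb{E} Z(n,m),
\end{equation}
with both expressions taking values in $[0,\infty]$. This reduces the problem to comparing two series of deterministic numbers.

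Next I would apply the diagonal translation invariance. Choosing $\ell = -m$ in the first sum gives $\mathbb{E} Z(m,n) = \mathbb{E} Z(0, n-m)$, while choosing $\ell = -n$ in the second sum gives $\mathbb{E} Z(n,m) = \mathbb{E} Z(0, m-n)$. Reindexing by $k = n-m$ in the first sum and by $k = m-n$ in the second sum (both of which are bijections of $\mathbb{Z}$ onto itself) yields
\begin{equation}
\sum_{n \in \mathbb{Z}} \mathbb{E} Z(m,n) = \sum_{k \in \mathbb{Z}} \mathbb{E} Z(0, k) = \sum_{n \in \mathbb{Z}} \mathbb{E} Z(n, m),
\end{equation}
which is the claimed identity. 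The only point that requires any care is the verification that Tonelli applies and that the reindexings are valid even when the sums are infinite, both of which follow immediately from nonnegativity.
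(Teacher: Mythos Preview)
Your proof is correct and follows essentially the same approach as the paper: interchange expectation and summation via Tonelli, apply diagonal translation invariance to each term, and reindex. The only cosmetic difference is that the paper uses the single shift $\ell = m-n$ to go directly from $\E Z(m,n)$ to $\E Z(2m-n,m)$ and then reindexes, whereas you reduce both sides to the common quantity $\sum_{k} \E Z(0,k)$.
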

\begin{proof}
Using Fubini's theorem and translation invariance of $\E [Z(m,n)]$, we obtain
\begin{align}
    \E \sum\limits_{n \in \Z} Z(m,n) =  \sum\limits_{n \in \Z} \E [Z(m,n)] = \sum\limits_{n \in \Z} \E [Z(2m-n,m)]
    = \sum\limits_{n \in \Z} \E [Z(n,m)] = \E \sum\limits_{n \in \Z} Z(n,m).
\end{align}
\end{proof}

\begin{proposition} \thlabel{prop:mc}
Let $c = 1- (a+b)$. The following equations hold so long as the parameters in the denominators are nonzero:
\begin{align}
    \f 1{a/2} \P( (\R_1 \la \L)_{(0, \infty)}) &= \f 1 b \P( (\R_1 \lrlb \L)_{(0, \infty)})= \f 1c \P((\R_1 \lrl \L)_{(0, \infty)}), \label{eq:rml}\\
    \P((u \la \b)_{(u, \infty)}) &= \f 1 \alpha \P((u \Laf \b)_{(u, \infty)}) = \f 1{1-\alpha} \P((u \lrlf \b)_{(u, \infty)}). \label{eq:smcf}
\end{align}
\end{proposition}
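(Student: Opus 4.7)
Plan: Both identities rest on two independence properties of the TCBA dynamics: the outcome of each $\R$-$\L$ collision is drawn independently of whether the collision occurs, and the quiver sizes are independent $\text{Geometric}_0(1-x)$ random variables whose memoryless property controls how far each arrow advances through blockades. I would first encode this by a pre-sampled probability space: for every ordered pair $(j,k)$, sample an outcome $\Theta_{j,k}\in\{\R,\L,\B,\emptyset\}$ independently from the rule distribution with weights $(a/2,a/2,b,c)$, and for every arrow $\b_i$, sample $\sigma_i \sim \text{Geometric}_0(1-x)$ independently. The entire TCBA dynamics then unfolds as a deterministic function of positions, types, quivers, and outcomes.

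For the first identity, partition by the left arrow $\L_k$ involved and let $M_{1,k}$ denote the event that $\R_1$ and $\L_k$ meet in the dynamics. Then $M_{1,k}$ is measurable with respect to the $\sigma$-algebra generated by positions, types, all quivers, and all $\Theta_{j',k'}$ with $(j',k')\ne(1,k)$, hence independent of $\Theta_{1,k}$. This gives
\begin{align*}
\P(\R_1 \ra \L_k) &= \tfrac{a}{2}\,\P(M_{1,k}), & \P(\R_1 \la \L_k) &= \tfrac{a}{2}\,\P(M_{1,k}), \\
\P(\R_1 \lrlb \L_k) &= b\,\P(M_{1,k}), & \P(\R_1 \lrl \L_k) &= c\,\P(M_{1,k}).
\end{align*}
Under the surviving-particle convention that post-coalescence the continuing arrow is reindexed (so each original $\R_1$ is associated with at most one such event), the events $\{\R_1 \star \L_k\}$ are disjoint across $k$; summing over $k$ and dividing by the weights produces the first chain of equalities.

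For the second identity, let $\b_i$ be the first arrow visiting $u$ and $N$ the number of blockades it has destroyed before arrival. The event $u \la \b$ requires $\sigma_i > N$, and by the memoryless property of the geometric distribution, $\P(\sigma_i > N+1 \mid \sigma_i > N) = x$ regardless of the joint distribution of $N$ and the configuration. Identifying $u \Laf \b$ with $\{u \la \b\}\cap\{\sigma_i > N+1\}$ and $u \lrlf \b$ with $\{u \la \b\}\cap\{\sigma_i = N+1\}$ then yields $\P(u \Laf \b)=x\,\P(u \la \b)$ and $\P(u \lrlf \b)=(1-x)\,\P(u \la \b)$. I expect the main subtlety to lie in formalizing the independence of $M_{1,k}$ from $\Theta_{1,k}$ in the first identity: although intuitively evident, it can appear circular under a dynamic sampling of outcomes, which is precisely why the pre-sampled construction above is the clean way to proceed.
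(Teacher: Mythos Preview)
Your proposal is correct and follows essentially the same reasoning as the paper, which dispatches the proposition in one sentence (``These are simple derivations from the probabilities of the various outcomes that occur when two particles meet''). You have unpacked that sentence into an explicit pre-sampled construction that makes the independence of the collision outcome from the meeting event transparent, and you have used the memoryless property of the geometric quiver sizes for the second line---exactly the two facts the paper is appealing to implicitly.

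One small comment: your pre-sampling of $\Theta_{j,k}$ over ordered pairs of \emph{original} indices only works cleanly once you commit to the reindexing convention you mention (so that each original arrow participates in at most one $\R$--$\L$ collision). The paper's own convention---each left arrow carries a queue of i.i.d.\ instructions for successive right arrows it meets---achieves the same independence without needing to track reindexed particles, and is slightly more robust if one wanted to reason about later collisions of the surviving arrow. For the second identity, your memorylessness argument is correct but relies on the (true) fact that the pair $(i,N)$ is determined by a sequential revealing of events of the form $\{\sigma_j>n\}$, so that conditionally the residual $\sigma_i-N$ is still geometric; this is the standard ``fresh randomness at a stopping time'' argument and is worth stating explicitly.
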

\begin{proof}
These are simple derivations from the probabilities of the various outcomes that occur when two particles meet. 
\end{proof}
We define a few auxiliary probabilities that will be useful when expanding \eqref{eq:q_rec}:
\begin{align}
s &:= P( (0 \la \b) \wedge (\R_1 \lrl \B_{k} \,\, {\textup{for some $k>1$}}) ) + \P( (0 \la \b) \wedge (\R_1 \lrl  \h))\\
r &:= P( (0 \not \la \b) \wedge (\R_1 \lrl \B_{k} \,\, {\textup{for some $k>1$}})) + \P( (0 \not \la \b) \wedge (\R_1 \lrl  \h)).
\end{align}


\begin{lemma} \thlabel{lem:partition}
Suppose that $c = 1- (a+b)>0$. For TCBA restricted to $(0,\infty)$ it holds that
\begin{align}
    \P((0 \la \b) \wedge \L_1 ) & = \f {1-p}2 \label{eq:pol}\\
    \P((0 \la \b) \wedge \B_1 )&= \alpha p q + (1-\alpha) p q^2 \label{eq:p0b}\\
    \P((0 \la \b) \wedge \R_1 )&=  \left( q +  \f {a/2} c    +  \f b c \f{\P( (0 \la \b) \wedge \B_1)}{p} \right)\P( \R_1 \lrl \L) + s    \label{eq:por} \\ 
    \P(\R_1 \lrl \L) &= \f{\f{1-p}2 - s - r}{1 + \f{a/2}c + \f bc}
    = \frac{c \left(p q^2 (1-\alpha)-2 p q (1-\alpha)-p+1\right)}{-a+b (2-q) q (1-\alpha)+2}\label{eq:prml}
\end{align}
\end{lemma}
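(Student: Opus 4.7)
The plan is to establish each identity by partitioning $\{0 \la \b\}$ on the type of $\b_1$, and when $\b_1$ is a right arrow, on the terminal reaction that destroys $\b_1$. The main tools are (i) translation invariance of the i.i.d.\ spacings, which shows that subintervals disjoint from the trajectories already under consideration carry a fresh one-sided process, (ii) the strong/weak decomposition of a first arrival by an arrow from \eqref{eq:smcf}, and (iii) the symmetry \eqref{eq:rml} among the four possible $\R - \L$ reaction outcomes.

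Equation \eqref{eq:pol} is immediate: on $(0,\infty)$ no particle lies between $0$ and $x_1$, so a left arrow at $x_1$ reaches $0$ and hence $\L_1 \subseteq \{0 \la \b\}$. For \eqref{eq:p0b}, condition on $\B_1$ and decompose $\{x_1 \la \b\}$ in the fresh process on $(x_1,\infty)$ into $\{x_1 \Laf \b\}$ (probability $xq$) and $\{x_1 \lrlf \b\}$ (probability $(1-x)q$) via \eqref{eq:smcf}. In the strong case the first arriving arrow destroys $\b_1$ and continues unimpeded to $0$, contributing $pxq$. In the weak case $\b_1$ and the incoming arrow mutually annihilate at $x_1$; by a renewal argument---namely, that after the weak annihilation the remaining dynamics on $(x_1,\infty)$ is distributionally equivalent to a fresh one-sided process, thanks to the i.i.d.\ spacings and the memorylessness of the geometric quivers---a subsequent arrival reaches $x_1$ (and hence $0$, since $\b_1$ is now gone) with probability $q$, contributing $p(1-x)q^2$.

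For \eqref{eq:por}, condition on $\R_1$ and partition by the terminal fate that destroys $\R_1$, noting that intermediate ``$\to$'' reactions in which $\R_1$ survives are absorbed into the eventual terminal event since $\R_1$ is a.s.\ eventually destroyed. In the case $\R_1 \lrl \L_j$, the segment $(0, x_j)$ is emptied of surviving particles, the configuration on $(x_{j+1}, \infty)$ is an independent fresh process, and by translation invariance the probability that some arrow reaches $0$ is $q$, contributing $q \cdot \P(\R_1 \lrl \L)$. In the case $\R_1 \la \L_j$ the surviving left arrow continues unimpeded to $0$, contributing $\P(\R_1 \la \L) = \tfrac{a/2}{c}\P(\R_1 \lrl \L)$ via \eqref{eq:rml}. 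In the case $\R_1 \lrlb \L_j$, a $\h$-blockade is created at $(x_1+x_j)/2$ with a fresh process on $(x_{j+1},\infty)$; by translation invariance this is the setup of \eqref{eq:p0b}, so the conditional probability of $\{0 \la \b\}$ equals $\P((0 \la \b)\wedge \B_1)/p$, and this case contributes $\tfrac{b}{c}\P(\R_1\lrl\L)\cdot \P((0 \la \b)\wedge \B_1)/p$. Finally, $\R_1$ mutually annihilating with a $\B$- or $\h$-blockade contributes $s$ by its definition in \thref{lem:wsr}.

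For \eqref{eq:prml}, apply total probability to the same terminal partition: $\tfrac{1-p}{2} = \P(\R_1) = \P(\R_1 \lrl \L) + \P(\R_1 \la \L) + \P(\R_1 \lrlb \L) + \P(\R_1 \lrl \B) + \P(\R_1 \lrl \h)$. The last two terms sum to $r + s$, and rewriting the middle two via \eqref{eq:rml} yields $\tfrac{1-p}{2} = \P(\R_1 \lrl \L)\bigl(1 + \tfrac{a/2}{c} + \tfrac{b}{c}\bigr) + s + r$, which rearranges to the first equality. For the second equality, substitute $s = \tfrac12(p+\hat p)(1-x)q^2$ and $r = (p+\hat p)(1-x)q(1-q)$ from \thref{lem:wsr} with $\hat p = \tfrac{b}{c}\P(\R_1 \lrl \L)$, combine $s + r = (p+\hat p)(1-x)q(2-q)/2$, use $1 + \tfrac{a/2}{c} + \tfrac{b}{c} = (1-a/2)/c$, and solve the resulting linear equation in $\P(\R_1 \lrl \L)$. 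The main technical obstacle is the renewal step in \eqref{eq:p0b} (also used in the $\R_1 \lrlb \L_j$ case of \eqref{eq:por}): rigorously justifying that the process on $(x_1, \infty)$ renews after a weak first arrival so that the probability of a subsequent arrival is again $q$. This is the same self-similarity/memorylessness mechanism underlying the $q^2$ factors in \thref{lem:wsr} and can be formalized via a mass-transport indicator argument analogous to the proof of \eqref{eq:s}.
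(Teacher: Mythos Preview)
Your proposal is correct and follows essentially the same approach as the paper: each identity is obtained by partitioning on the type of $\b_1$ and, when $\b_1$ is a right arrow, on its terminal reaction, combined with the change-of-measure identities in \thref{prop:mc} and the formulas for $s,r$ from \thref{lem:wsr}. One small correction to your closing paragraph: the renewal underlying the extra factor of $q$ in \eqref{eq:p0b} (and in the $\R_1\lrlb\L_j$ case of \eqref{eq:por}) does not require a mass-transport argument. It is simply conditional independence: for fixed $j$, the event $\{\B_1\lrl\L_j\}$ is determined by the particles and reactions in $[x_1,x_j]$ alone, that interval is swept clean by the time of the collision, and the process on $(x_j,\infty)$ is an independent copy of the one-sided process, so a subsequent visit to $x_1$ has probability $q$. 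The memorylessness of the geometric quiver is what justifies \eqref{eq:smcf}, not the renewal itself.
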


\begin{proof}[Proof of \eqref{eq:pol}]
This is simply the observation that $\P( ( 0 \la \b) \wedge \L_1) = \P(\L_1).$
\end{proof}

\begin{proof}[Proof of \eqref{eq:p0b}]
We condition on $\{\B_1\}$ and consider the two ways that $\B_1$ can be annihilated: 
\begin{align}
    \P( (0 \la \b) \wedge \B_1 ) &=   p \P(\B_1 \la \L \mid \B_1)+ p\P( (0 \la \b) \wedge (\B_1 \lrl \L) \mid \B_1).
\end{align}
By \eqref{eq:smcf}, $\P(\B_1 \la \L \mid \B_1) = \P(x_1 \Laf \L) = \alpha q$. For the second term, 
\begin{align*}
    \P((0\la \b) \wedge (\B_1 \lrl \L) \mid \B_1) = \sum_{k> 1}\P((x_1 \lrlf \L_k)_{(x_1, x_k]} \wedge (x_k \la \b)_{(x_k, \infty)}).
\end{align*}
Since the events on $(x_1, x_k]$ and $(x_k, \infty)$ are independent, we have
\begin{align*}
    \P((0\la \b) \wedge (\B_1 \lrl \L) \mid \B_1) 
    &= \sum_{k>1} \P((x_1 \lrlf \L_k)_{(x_1, x_k]}) q \\
    &= \P(x_1 \lrlf \L) q = (1-\alpha) q^2.
\end{align*}
The last equality uses \eqref{eq:smcf} again. Putting both terms together, we obtain the claimed formula.
\end{proof}

\begin{proof}[Proof of \eqref{eq:por}]
We partition on the various ways that $\R_1$ is destroyed to write $\P( ( 0 \la \b) \wedge \R_1)$ as
\begin{align}
 \P((0 \la \b) \wedge (\R_1 \lrl \L) ) & +  \P((0 \la \b) \wedge (\R_1 \la \L) ) + \P((0 \la \b) \wedge (\R_1 \lrlb \L) )  \label{eq:L}  \\
 &+ \P((0 \la \b) \wedge (\R_1 \lrl \B) )  + \P((0 \la \b) \wedge (\R_1 \lrl \h ) ). \label{eq:B}
\end{align}
 
The first term of \eqref{eq:L} is equal to $\P(\R_1 \lrl \L) q$ via the renewal that occurs after $\R_1 \lrl \L$. For the second term in \eqref{eq:L}, the left arrow that destroys $\R_1$ will reach $0$. The change of measure in \thref{prop:mc} gives $\P( \R_1 \la \L) = ((a/2)/c)\P(\R_1 \lrl \L)$. For the third term in \eqref{eq:L}, we note that \thref{prop:mc} ensures that $\P(\R_1 \lrlb \b) =  (b/ c)\P( \R_1 \lrl \L)$. Next, after a blockade is generated by $\R_1 \lrlb \b$, having $0$ visited is a translation of the event $\{(0 \la \b) \wedge \B_1\}$ conditional on $\B_1$ already being present (since a blockade was generated). Thus, we write the probability as $\P((0 \la \b) \wedge \B_1)/p$. 

Lastly, observe that the two terms at \eqref{eq:B} have sum equal to $s$.
\end{proof}

\begin{proof}[Proof of \eqref{eq:prml}]
The proof uses the fact that right arrows in $(0,\infty)$ is destroyed with probability $1$. This is proven in \cite[Lemma 3.4]{ST} and the same observation holds for TCBA.
The basic reason right arrows cannot survive in the right half-line is that, if they did, symmetry ensures left arrows also survive in the left half-line. Ergodicity gives a positive density of left and right surviving arrows in the full-line, which is a contradiction. This lets us form the partition
\begin{align}
\P(\R_1) = \f {1-p}{2} &= \P( \R_1 \lrl \B) + \P( \R_1 \la \L) + \P(\R_1 \lrl \L) + \P( \R_1 \lrlb \L).
\end{align}
The first term on the right side is equal to $s+r$. The last three terms can be transformed using \thref{prop:mc} which gives
\begin{align}
\f{1-p}{2} &= s +r + \f{a/2}{c} \P( \R_1 \lrl \L) + \P(\R_1 \lrl \L) + \f{b}{c} \P( \R_1 \lrl \L). \label{eq:Rmc}
\end{align}
Solving for $\P( \R \lrl \L)$ gives the first claimed equality in \eqref{eq:prml}. 

To obtain the final expression that involves only parameters and $q$, we use formulas for $s$ and $r$ which we will derive in \thref{lem:wsr}. Recall that $a+b+c=1$ and notice that the formulas for $s$ and $r$ depend only on parameters, $q$, and $\hat p = \P(\R_1 \lrlb \L)$, which by \thref{prop:mc} can be written as $(b/c)\P(\R_1 \lrl \L)$. Substitute this into the formulas for $s$ and $r$ at \eqref{eq:s} and \eqref{eq:r}. Then, substitute these expressions into the first equality at \eqref{eq:prml}. This results in a linear equation in $\P(\R_1 \lrl \L)$. Solving and simplifying this linear equation, which we used mathematical software to carry out, gives the claimed equality. 
\end{proof}

\begin{remark}
If $c=0$, similar formulas as in \thref{lem:partition} could be derived using whichever parameter of $a$ and $b$ is nonzero. For example, if $c=0$ and $a >0$, then we would write the terms involving arrow-arrow collision that partition $\P( (0 \la \b) \wedge \R_1)$ in \eqref{eq:L}
$$\P((0 \la \b) \wedge (\R_1 \lrl \L) ) +  \P((0 \la \b) \wedge (\R_1 \la \L) ) + \P((0 \la \b) \wedge (\R_1 \lrlb \L) )$$
in terms of $\P(\R_1 \la \L)$ using \thref{prop:mc}, rather than in terms of $\P(\R_1 \lrl \L)$ as we did under the assumption $c>0$. The application of \thref{prop:mc} to rewrite \eqref{eq:Rmc} could be handled similarly. 
\end{remark}



In the following derivations, we consider events that occur on systems restricted to various intervals. So moving forward we include all subscripts for clarity.

\begin{lemma} \thlabel{lem:wsr}

Recall that we denote a blockade generated from a $\R_m \lrlb \L_n$ collision by $\h_{m,n}$. Let $\hat p = \P( (\R_1 \lrlb \L)_{(0,\infty)})$. The following identities hold for TCBA: 
\begin{align}
    s &= \P( (0 \la \b)_{(0,\infty)} \wedge (\R_1 \lrl  \B)_{(0,\infty)}) +  \P( (0 \la \b)_{(0,\infty)} \wedge (\R_1 \lrl  \h)_{(0,\infty)})   \\
    &\qquad \qquad = \f 12  (p+\hat p) (1-\alpha) q^2 \label{eq:s}\\
    r & = \P( (0 \not \la \b)_{(0,\infty)} \wedge (\R_1 \lrl  \B)_{(0,\infty)} ) + \P( (0 \not \la \b)_{(0,\infty)} \wedge (\R_1 \lrl  \h)_{(0,\infty)} )\\
    &\qquad \qquad =  (p+\hat p)(1-\alpha) q (1 - q)  \label{eq:r} 
\end{align}
\end{lemma}

\begin{proof}[Proof of \eqref{eq:s}]
For $m,n\in\Z$ define the indicator random variable  
\begin{align}
    Z(m,n)&=  \ind{{(\R_m\longleftrightarrow\B_n)_{[x_m,\infty)}\wedge(x_n\leftarrow\bullet)_{(x_n,\infty)}}} \\
    & \qquad\qquad  + \sum_{m<k<n} \ind{{(\R_m\longleftrightarrow\h_{k,n})_{[x_m,\infty)}\wedge(x_n\leftarrow\bullet)_{(x_n,\infty)}}}.  
    \end{align}
Note that $Z(m,n)=0$ when $m\ge n$. $Z(m,n)$ describes the ways in which $\R_m$ is annihilated by either $\B_n$ or a $\h$-particle generated by $\L_n$, and subsequently $x_n$ is visited by a left arrow. When the event in the indicator $Z(m,n)$ occurs, we also have $x_m \la \b$. The events in the indicators in the sum $\sum_{n>1} Z(1,n)$ form a partition of the event $\{(0 \la \b)_{(0,\infty)} \wedge ((\R_1 \lrl \B)_{(0,\infty)} \vee (\R_1 \lrl \h)_{(0,\infty)})\}$.
This gives 
\begin{align}
s= \E\sum_{n>1}Z(1,n). \label{eq:right}
\end{align}
Next, let us define ${\vec{\tau}}^{(x_n)}$ and ${\cev{\tau}}^{(x_n)}$ as the times at which $x_n$ is first visited in TCBA restricted to $(-\infty, x_n)$ and $(x_n, \infty)$, respectively. For example, if $0$ is first visited by $\L_k$, then $\cev{\tau}^{(0)} = x_k$. By symmetry ${\vec{\tau}}^{(x_n)}$ and ${\cev{\tau}}^{(x_n)}$ are i.i.d. Moreover by ergodicity, ${\vec{\tau}}^{(x_{n_1})}$ and ${\vec{\tau}}^{(x_{n_2})}$ have the same distribution for $n_1\le n_2$. Since the interdistances between particles are according to a continuous distribution, we have
\begin{align}
\P({\vec{\tau}}^{(x_{n_1})} < {\cev{\tau}}^{(x_{n_2})} \mid {\vec{\tau}}^{(x_{n_1})}, {\cev{\tau}}^{(x_{n_2})} < \infty )= \f 12.\label{eq:1/2}
\end{align}
By \thref{prop:mtp} and the fact that $Z(m,n)=0$ for $m\ge n$, \eqref{eq:right} is equal to
\begin{align}
    \E \sum_{m<1}Z(m,1) 
    &= \sum_{m<1}  \P\left( (\R_m\lrl \B_{1})_{[x_m,\infty)} \wedge (x_{1}\la\L)_{(x_{1},\infty)}  \right) \\
    &\qquad  +\sum_{m<k<1} \P\left( {(\R_m\lrl \hat\b_{k,1} )_{[x_m,\infty)} \wedge (x_{1}\la\L)_{(x_{1},\infty)}} \right) \\ 
    & =  \P\left( (\B_{1})\land (\R \lrlf x_{1})_{(-\infty,x_{1})} \wedge (x_{1}\la\L)_{(x_1,\infty)} \land (\vec{\tau}^{(x_{1})}< \cev{\tau}^{(x_{1})} ) \right) \\
    &\qquad  +  \P\left( \exists \, k<1\,:\, {(\R \lrl \h_{k,1})_{(-\infty,\infty)} \wedge (x_{1}\la \L)_{(x_{1},\infty)}} \right)\\
    &= \f 12  p (1-\alpha)q^{2} + \frac 12  \hat p(1-\alpha)q^2.
    \label{eq:fin}
\end{align}
We will conclude by justifying the equality at \eqref{eq:fin}, which will complete the proof of \eqref{eq:s}.  

For the first term in \eqref{eq:fin}, note that  \eqref{eq:smcf} ensures it is equal to
 \begin{align}
(1-\alpha) \P\left( (\B_{1})\land (\R \ra x_{1})_{(-\infty,x_{1})} \wedge (x_{1}\la \L)_{(x_1,\infty)} \land  (\vec{\tau}^{(x_{1})}< \cev{\tau}^{(x_{1})} ) \right).
 \end{align}
Notice that the events $(\R \lrlf x_1)_{(- \infty, x_1)}$ and $(x_1 \la \L)_{(x_1,\infty)}$ imply that $\vec{\tau}^{(x_1)}, \cev{\tau}^{(x_2)}<\infty$ but provide no further observation regarding the values of these arrival times, that is, they are independent of the event $(\vec{\tau}^{(x_{1})}< \cev{\tau}^{(x_{1})} )$. 
Applying \eqref{eq:1/2} and independence of the one-sided processes gives 
 \begin{align}
\P\left( (\B_{1})\land (\R \lrlf x_{1})_{(-\infty,x_{1})} \wedge (x_{1}\la \L)_{(x_1,\infty)} \land  (\vec{\tau}^{(x_{1})}< \cev{\tau}^{(x_{1})} ) \right) &= \f 12(1-\alpha) p q^2 .
 \end{align}
 
For the second term in \eqref{eq:fin}, it suffices to show that 
\begin{align}
 &\P\left( \exists \, k<1\,:\, {(\R \lrl \h_{k,1})_{(-\infty,\infty)} \wedge (x_{1}\la\L)_{(x_{1},\infty)}} \right) \\
 &\qquad =
     \P\left(  \begin{matrix} \exists \, k<1\,:\,  (\R \lrlf x_{k})_{(-\infty, x_{k})}  \land (\R_{k} \lrlb \L_{1})_{[x_{k},x_{1}]}  \\
\qquad \quad \land (x_{1}\la \L)_{(x_{1},\infty)} 	\land (\vec{\tau}^{(x_{k})}< \cev{\tau}^{(x_{1})} ) \end{matrix} \right) = 
\f 12 (1-\alpha) \hat p q^2\label{eq:fin2}.
\end{align}
 
The first equality in \eqref{eq:fin2} is justified by observing that, for each $k<1$, the event 
\begin{align}
    \left\{ (\R \lrl \h_{k,1} )_{(-\infty,\infty)} \wedge (x_{1}\la\L)_{(x_{1},\infty)} \right\} 
\end{align}
is equal to 
\begin{align}
\left\{    (\R \lrlf  x_{k})_{(-\infty, x_{k})}  \land (\R_{k} \lrlb \L_{1})_{[x_{k},x_{1}]}  
\land (x_{1}\la \L)_{(x_{1},\infty)} 	\land (\vec{\tau}^{(x_{k})}< \cev{\tau}^{(x_{1})} ) \right\}.
\end{align}
Indeed, observe that the right and left arrows have the same constant speed and the blockade $\h_{k,1}$ is at the midpoint $x_{k,1}=(x_{k}+x_{1})/2$ of the interval $[x_{k},x_{1}]$. In order for the blockade created by the event $(\R_{k}\lrlb \L_{1})_{[x_{k}, x_{1}]}$ at location $x_{k,1}$ to be annihilated by a right-arrow, the visit corresponding to $(\R \lrlf x_k)_{(-\infty, x_k)}$ must occur before the first visit to $x_{1}$ by left-arrow.  

For the second equality in \eqref{eq:fin2}, note that the event $(\R_{k} \lrlb \L_{1})_{[x_{k},x_{1}]} $ can occur only for one $k<1$, and the restricted processes on $(-\infty, x_{k})$ and $[x_{k}, x_{1}]$ are independent conditional on $(\R_{k} \lrlb \L_{1})_{[x_{k},x_{1}]}$. It follows that, conditional on $(\R \lrlb \L_{1})_{(-\infty,x_{1}]}$, defining $k$ to be the unique (random) index such that $(\R_{k} \lrlb \L_{1})_{[x_{k},x_{1}]}$ occurs,  $(\R \ra x_{k})_{(-\infty,x_{k})}$ and $(x_{1}\la \L)_{(x_1,\infty)}$ are independent and symmetric in the sense that the random times $\vec{\tau}^{(x_{k})}$ and $\cev{\tau}^{(x_{1})}$ are independent with the same distribution. This yields the second equality in \eqref{eq:fin2}, as desired. 
\end{proof}

\begin{remark} \thlabel{rem:block}
It seems  that the major obstacle to extending TCBA to a four-parameter family that includes the reactions $[\R - \B \implies \B]$ and $[\B - \L \implies \B]$ consists in an asymmetry that arises when comparing certain arrival times of arrows in the one-sided processes. Call a visit to the origin ``strong" if the arrow would destroy a blockade there. Call it ``weak" if a blockade at $0$ would survive the interaction. Let $\vec S$ and $\vec W$ be the times of the first strong and weak visits to $0$ from the left in the one-sided process on $(-\infty,0)$, and similarly for $\cev S$ and $\cev W$. Extending \eqref{eq:s} to these reactions would then require us to compute
\begin{align}\P( (\cev S > \vec S) \wedge (\vec S > \vec W) \mid \cev S, \vec S < \infty  ). \label{eq:y}
\end{align}
Unlike when comparing $\vec \tau$ and $\cev \tau$, there is no obvious symmetry that allows us to compute the value of \eqref{eq:y}. Hence one needs to carry \eqref{eq:y}, as well as an additional term to account for $\R_1$ being destroyed by weakly visiting a blockade, into the equation for $s$ at \eqref{eq:s}. And, eventually solving the main recursion for $q$ becomes intractable without a value for \eqref{eq:y}. A similar difficulty has been observed in the asymmetric BA considered in \cite{JL}. There, $\vec{\tau}$ and $\cev{\tau}$ may have different distribution so $\P(\vec{\tau}\le \vec{\tau})$ cannot be inferred via symmetry. It may depend on the system parameters.
\end{remark}

\begin{proof}[Proof of \eqref{eq:r}]
The proof is similar to \eqref{eq:s}, but uses the modified indicators
\begin{align}
    Z'(m,n)&=  \ind{{(\R_m\longleftrightarrow\B_n)_{[x_m,\infty)}\wedge(x_n\not \la \bullet)_{(x_n,\infty)}}} \\
    & \qquad\qquad  + \sum_{m<k<n} \ind{{(\R_m\longleftrightarrow\h_{k,n})_{[x_m,\infty)}\wedge(x_n\not \la \bullet)_{(x_n,\infty)}}}  
\end{align}
The difference from the indicators used in \eqref{eq:s} is that we require that $x_n$ \emph{not} be visited by a left arrow.

By the definition, $Z'(m,n)=0$ for $m\ge n$. Hence 
	\begin{align}
	   r= \E \sum_{n> 1} Z'(1,n) = \E \sum_{n\in \Z} Z'(1,n) = \E \sum_{m\in \Z} Z'(m,1) = \E \sum_{m<1} Z'(m,1),
	\end{align}
	where the second equality uses the mass transport principle. Using similar reasoning as in the proof of \eqref{eq:s}, this expands to be the claimed formula for $r$. Here is the derivation, but without the similar explanations given in the proof of \eqref{eq:s}:
%
\begin{align}
  r
  &=\sum_{m<1}\P({(\R_m\lrl \B_1){_{[x_m,\infty)}}\wedge(x_1\not\leftarrow\bullet)}_{(x_1,\infty)} ) \\
  &  \qquad \qquad + \sum_{m<k<1}  \P\left( {(\R_m\lrl \hat\b_{k,1} )_{[x_m,\infty)} \wedge (x_{1}\not \la\L)_{(x_{1},\infty)}} \right) \\ 
& =  \P\left( (\B_{1})\land (\R \lrlf x_{1})_{(-\infty,x_{1})} \wedge (x_{1}\not \la\L)_{(x_1,\infty)}   \right) \\
&\qquad  +  \P\left( \exists \, k<1\,:\, {(\R \lrl \h_{k,1})_{(-\infty,\infty)} \wedge (x_{1}\not \la \L)_{(x_{1},\infty)}} \right)\\
    &=(1-\alpha) p q (1-q) + (1-\alpha)\hat p q (1-q).
\end{align}
\end{proof}

\begin{proposition}\thlabel{prop:q}
For TCBA, it holds for all $p \in [0,1)$ that $0= (1-q)g(p,q)$ with $g$ defined as
\begin{align}
  g(u,v)=  \frac{1-u -2 (1-\alpha) u v - b(1-\alpha) v^2 -(1- (a+b))(1-\alpha) u v^2  }{2-a+b (1-\alpha)(2-v) v }\label{eq:g}.
\end{align}
\end{proposition}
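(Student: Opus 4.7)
My plan is to carry out a direct substitution-and-factoring argument based on the partition of $q$ in \eqref{eq:q_rec} and the four explicit formulas in \thref{lem:partition}. The identity $0=(1-q)g(p,q)$ should emerge as a rearrangement of the recursive equation for $q$, with the factor $(1-q)$ coming out automatically because $q=1$ is always a symbolic solution of the recursion.

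Concretely, I would first rewrite the decomposition $q = \P((0\la\b)\wedge \L_1)+\P((0\la\b)\wedge \B_1)+\P((0\la\b)\wedge \R_1)$ by plugging in \eqref{eq:pol}, \eqref{eq:p0b}, and \eqref{eq:por} to obtain
\begin{align}
q = \tfrac{1-p}{2} + pq\bigl(x+(1-x)q\bigr) + \Bigl(q + \tfrac{a/2}{c} + \tfrac{b}{c}\cdot \tfrac{\P((0\la\b)\wedge\B_1)}{p}\Bigr)\P(\R_1\lrl\L) + s.
\end{align}
The quantity $s$ depends on $\hat p = \P(\R_1\lrlb\L)$, but by \thref{prop:mc} we have $\hat p = (b/c)\,\P(\R_1\lrl\L)$, so after substituting this into \eqref{eq:s} and then substituting the closed form for $\P(\R_1\lrl\L)$ from \eqref{eq:prml}, the right-hand side becomes an explicit rational function of $p$ and $q$ whose denominator is $2-a+b(1-x)(2-q)q$.

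Next, I would move everything to one side and clear that denominator, producing a polynomial identity $F(p,q)=0$. To recognize the factor $(1-q)$, I would check that $q=1$ is a root: when $q=1$ we have $\P((0\la\b)\wedge\L_1)=(1-p)/2$, $\P((0\la\b)\wedge\B_1)=p$, and the $\R_1$ term also equals $(1-p)/2$ (because $r|_{q=1}=0$ by \eqref{eq:r}, so the sum telescopes via the definition of $\P(\R_1\lrl\L)$ in \eqref{eq:prml} and cancels the $s$ contribution). Hence the three pieces sum to $1$, showing $(1-q)\mid F(p,q)$. Performing the polynomial division (or equivalently symbolic simplification with computer algebra, as was done to derive \eqref{eq:prml}) yields the quotient, which I would then verify matches the claimed
\begin{align}
g(p,q) = \frac{1-p-2(1-x)pq-b(1-x)q^{2}-\bigl(1-(a+b)\bigr)(1-x)pq^{2}}{2-a+b(1-x)(2-q)q}.
\end{align}

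The conceptual structure is therefore straightforward: the recursion is linear in $\P(\R_1\lrl\L)$ after everything is expanded, the $q=1$ root is forced by the combinatorial meaning of $q$, and the quotient is $g$. The main obstacle is purely the bookkeeping. With $a$, $b$, $x$, $p$, $q$ all appearing multiplicatively and the denominator $c=1-(a+b)$ threading through several formulas, a hand calculation is unwieldy and error-prone, so I would carry out the simplification with symbolic algebra software (as the authors already do for \eqref{eq:prml}) and then present the factored form as the final identity.
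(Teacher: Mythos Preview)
Your proposal is correct and follows essentially the same approach as the paper's proof: substitute the formulas from \thref{lem:partition} into the partition \eqref{eq:q_rec} to obtain $q=G(p,q,a,b,x)$, then algebraically factor $-q+G$ as $(1-q)g(p,q)$. Your explicit check that $q=1$ is a root (via $r|_{q=1}=0$ and the telescoping with \eqref{eq:prml}) is a nice addition that the paper subsumes under ``some algebra,'' but the route is the same.
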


\begin{proof}
    \thref{lem:partition} gives a formula for each term in the partition for $q$ at \eqref{eq:q} that depends only on $p,q,a,b,$ and $\alpha$. This yields $q = G(p,q,a,b,\alpha)$ for an explicit function $G$. Some algebra gives that $0= -q + G(p,q,a,b,\alpha) = (1-q)g(p,q)$ as claimed.
\end{proof}

\section{Properties of $g$} \label{sec:g}

We will derive some elementary, but useful properties of the function $g$ from \eqref{eq:g}. It will be helpful for presenting our arguments to distinguish $p_c$ from its claimed formula in \thref{thm:sc}. So, we introduce the term
\begin{align}
p_* &=  \frac{1 - b (1-\alpha)}{4 -3\alpha - (a+b)(1-\alpha)}.
\label{eq:p*}
\end{align}
Note that the denominator is positive as it can be rearranged as $4-(a+b) - (3-(a+b))\alpha$. With the notation of $p_*$, the first part of \thref{thm:sc} could be restated as $p_c = p_*$ for all TCBA. Note that $p_*$ is derived by solving the equation $g(u,1) =0$. In some sense this corresponds to finding the transition point for $q$ being a root of $1-q$ to being a root of $g(p,q(p))$ seen in Figure~\ref{fig:q}. We record the fact that $p_*$ is the unique such solution in the following lemma.


\begin{lemma} \thlabel{lem:p*}
$u=p_*$ is the unique solution to $g(u,1) =0$. 
\end{lemma}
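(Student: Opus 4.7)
The plan is to argue that the equation $g(u,1)=0$ reduces to a single linear equation in $u$ whose leading coefficient is provably nonzero under the standing hypotheses on $(a,b,x)$, so that the unique solution can be read off directly and compared with the formula for $p_*$ in \eqref{eq:p*}.

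First I would verify that the denominator of $g(u,v)$ evaluated at $v=1$ is nonzero, so that $g(u,1)=0$ is equivalent to the vanishing of the numerator at $v=1$. Plugging $v=1$ into the denominator gives $2-a+b(1-x)$; since $a\in[0,1)$ and $b(1-x)\ge 0$, this is bounded below by $1$, hence strictly positive. So the root equation is controlled entirely by the numerator.

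Next I would expand the numerator of $g(u,1)$. Collecting the $u$-free terms yields $1-b(1-x)$, while collecting the coefficient of $u$ gives
\begin{align}
1+2(1-x)+(1-(a+b))(1-x) = 1+(1-x)(3-a-b) = 4-3x-(a+b)(1-x),
\end{align}
after routine rearrangement. Thus $g(u,1)=0$ is equivalent to
\begin{align}
\bigl[4-3x-(a+b)(1-x)\bigr]\,u \;=\; 1-b(1-x),
\end{align}
which is linear in $u$ and immediately gives $u=p_*$ by \eqref{eq:p*}.

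Uniqueness then reduces to showing that the coefficient $4-3x-(a+b)(1-x)$ is nonzero. Since $x\in[0,1)$ we have $4-3x>1$, and since $a+b\le 1$ and $1-x\le 1$ we have $(a+b)(1-x)\le 1-x<1$; combining these bounds yields $4-3x-(a+b)(1-x)>3-2x>1>0$. So the equation is genuinely linear with nonzero leading coefficient, and $p_*$ is its only solution. The one step requiring mild care is the algebraic simplification of the coefficient of $u$, but no step presents a real obstacle — the entire lemma is a bookkeeping computation designed to motivate the definition of $p_*$ as the transition point of the identity in \thref{prop:q}.
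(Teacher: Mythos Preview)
Your proof is correct and follows essentially the same approach as the paper: observe that $g(\cdot,1)$ is linear in $u$ (with nonzero leading coefficient), so there is a unique root, and verify that this root equals $p_*$. The paper's version is terser---it simply asserts that $g(p_*,1)=0$ is easily checked and that linearity in $u$ gives uniqueness---whereas you carry out the computation explicitly and also verify that the linear coefficient is nonzero, which is strictly speaking needed for the uniqueness claim. One trivial slip: the inequality $1-x<1$ fails when $x=0$, but this does not affect your argument since the bound $3-2x>1$ follows directly from $x<1$.
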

\begin{proof}
It is easily checked that $g(p_*,1)=0$. Moreover, since $g$ is linear in $u$, it follows that this solution is unique. 
\end{proof}

\begin{lemma} \thlabel{lem:partials}
$\partial_u g(u,v), \partial_v g(u,v) <0$ for all TCBA and $u,v \in [0,1]$.
\end{lemma}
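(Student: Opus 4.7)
The plan is to compute both partial derivatives directly from the formula \eqref{eq:g} and reduce the less obvious bound to a single positivity statement. Write $y = 1-x$ and $c = 1-a-b$, and let $N(u,v)$ and $D(u,v)$ denote the numerator and denominator of $g$. I would first observe that
\begin{align}
D(u,v) = 2 - a + byv(2-v) > 0 \qquad \text{on } [0,1]^2,
\end{align}
since $a < 1$ gives $2 - a > 1$ and $byv(2-v) \geq 0$ whenever $v \in [0,1]$. The constraint $a + b \leq 1$ also ensures $c \geq 0$, which will be used throughout.

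For $\partial_u g$, the key structural observation is that $N$ is linear in $u$ while $D$ does not depend on $u$, so $\partial_u g = (\partial_u N)/D$. Direct differentiation gives $\partial_u N = -[1 + 2yv + cyv^2]$. Each summand in the bracket is nonnegative (using $c \geq 0$) and the first equals $1$, so $\partial_u N \leq -1 < 0$, and hence $\partial_u g < 0$. This part is routine.

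For $\partial_v g$, the quotient rule gives $\partial_v g = [(\partial_v N) D - N (\partial_v D)]/D^2$. Computing $\partial_v N = -2y[u + bv + cuv]$ and $\partial_v D = 2by(1-v)$, one factors out the common $-2y < 0$ to see that $\partial_v g < 0$ is equivalent to positivity of
\begin{align}
\Phi(u,v) := [u + bv + cuv]\,D(u,v) + b(1-v)\,N(u,v).
\end{align}
The main step, and the main obstacle, is to expand $\Phi$ into monomials in $u,v$. A careful accounting shows that the cubic-in-$v$ terms cancel and leaves
\begin{align}
\Phi(u,v) = (2-a-b)u + b + b(1-a)v + [(2-a)c + b]\,uv + b^2 y v^2 + b(2-a-b)y\, uv^2.
\end{align}
Under the standing constraints $0 \leq a,b,x < 1$ and $a+b \leq 1$, each of the six coefficients above is manifestly nonnegative, and moreover $2-a-b \geq 1$ ensures that the $(2-a-b)u$ term provides strict positivity whenever $u > 0$ (and the constant $b$ contributes when $b > 0$). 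This yields $\partial_v g < 0$ as claimed. The principal difficulty is purely algebraic: tracking all cross products in the expansion, verifying the cubic cancellation, and certifying that every remaining coefficient is nonnegative.
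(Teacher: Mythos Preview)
Your approach is the paper's: compute both partials directly and check signs. The paper's only extra observation is that your $\Phi$ factors as $[(2-a-b)u+b]\cdot[1+(1-a)v+b(1-x)v^2]$, which makes the sign immediate without expanding into six monomials. One boundary caveat (shared by the paper): when $b=0$ and $u=0$ your $\Phi$ vanishes, so the \emph{strict} inequality $\partial_v g<0$ actually fails at that corner; this is harmless because every downstream use of the lemma has $u=p>0$.
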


\begin{proof}
 The partial derivatives of $g$ are
 \begin{align}
     \partial_u g(u,v) &= - \frac{v^2 (1-\alpha) (1-(a+b))+2 v (1-\alpha)+1}{2 -a+b (2-v) v (1-\alpha)},\label{eq:u}\\
     \partial_v g(u,v)&=-\frac{2 (1-\alpha) ((2-a) u+b (1-u)) \left((1-a) v+b v^2 (1-\alpha)+1\right)}{(2-a+b (2-v) v (1-\alpha))^2}.\label{eq:v}
 \end{align}
As all parameters lie in $[0,1)$, it is easy to check that both partial derivatives are negative.
\end{proof}

\begin{lemma}\thlabel{lem:g}
\begin{align}
    g(u,v) &>0 \text{ for } (u,v) \in (0,p_*) \times [0,1] \label{eq:g>0} \\
    g(u,v) &<0 \text{ for } (u,v) \in (p_*,1] \times [1,\infty]. \label{eq:g<0}
\end{align}
\end{lemma}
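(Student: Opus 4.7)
The plan is to reduce both claims to the anchor identity $g(p_*, 1) = 0$ from \thref{lem:p*}, transporting signs along the coordinate directions via the strict monotonicity established in \thref{lem:partials}. In other words, knowing that $g$ is coordinate-wise strictly decreasing on $[0,1]^2$ and that it vanishes at the single point $(p_*, 1)$ on the line $v = 1$ is essentially enough.

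For \eqref{eq:g>0}, I would fix $(u, v) \in (0, p_*) \times [0, 1]$ and write
\begin{align}
g(u, v) \;\geq\; g(u, 1) \;>\; g(p_*, 1) \;=\; 0.
\end{align}
The first inequality follows from $\partial_v g < 0$ on $[0,1]^2$ applied with $u$ held fixed and $v \leq 1$, the second from $\partial_u g < 0$ applied with $v = 1$ held fixed and $u < p_*$, and the last is \thref{lem:p*}.

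For \eqref{eq:g<0}, the analogous chain
\begin{align}
g(u, v) \;\leq\; g(u, 1) \;<\; g(p_*, 1) \;=\; 0
\end{align}
works verbatim at $v = 1$ for any $u \in (p_*, 1]$, using $\partial_u g < 0$ and \thref{lem:p*}. Extending it to $v \in [1, \infty]$ requires an observation beyond \thref{lem:partials}, which is only stated for $v \in [0,1]$. Here I would re-inspect the explicit formula \eqref{eq:v} for $\partial_v g$: the linear-in-$u$ factor $(2-a)u + b(1-u)$ is nonnegative for $u \in [0,1]$ (since $a \leq 1$ and $b \geq 0$), and the quadratic-in-$v$ factor $(1-a)v + b(1-x)v^2 + 1$ is strictly positive for every $v \geq 0$ (since $a \leq 1$). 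Hence $\partial_v g < 0$ persists on the full region $v \geq 0$ where the denominator of $g$ does not vanish, so the chain above continues to hold there.

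The only real obstacle is thus the extension of strict $v$-monotonicity past $v = 1$; once the explicit formula for $\partial_v g$ is inspected this is essentially immediate from the assumption $a, b \in [0,1]$ with $a+b \leq 1$. Everything else is sign-chasing from the single known value $g(p_*, 1) = 0$.
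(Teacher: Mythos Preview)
Your argument for \eqref{eq:g>0} is correct and is exactly the paper's approach: anchor at $g(p_*,1)=0$ via \thref{lem:p*}, then transport signs using the strict monotonicity of \thref{lem:partials}. You are also right that the paper's invocation of \thref{lem:partials} does not literally cover $v>1$, and it is to your credit that you try to extend it.

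However, your extension for \eqref{eq:g<0} has a genuine gap. You argue that $\partial_v g<0$ persists for all $v\ge 0$ ``where the denominator of $g$ does not vanish,'' and then conclude that the chain $g(u,v)\le g(u,1)$ continues to hold. But the denominator of $g$, namely $2-a+b(1-x)v(2-v)$, \emph{does} vanish for some $v>2$ whenever $b(1-x)>0$, and monotonicity of a rational function on each side of a pole does not give monotonicity across it: $g(u,\cdot)$ can go to $-\infty$ as $v$ approaches the pole from the left and return from $+\infty$ on the right. Concretely, with $a=0$, $b=1/2$, $x=0$ one has $p_*=1/7$, and $g(1,4)=(-24)/(-2)=12>0$ even though $(1,4)\in(p_*,1]\times[1,\infty]$. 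So \eqref{eq:g<0} as literally stated is false in general, and no amount of sign-chasing on $\partial_v g$ can rescue it beyond the pole.

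The paper's own proof glosses over this entirely; in practice only the slice $v=1$ (or at most $v\in[0,1]$) is ever needed, and for that range your chain $g(u,1)<g(p_*,1)=0$ together with \thref{lem:partials} is a complete proof. The correct fix is to restrict the second claim to $(p_*,1]\times\{1\}$ (or to the region where the denominator stays positive), not to try to push monotonicity through the singularity.
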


\begin{proof}
    \thref{lem:p*} states that $u=p_*$ is the unique solution to $g(u,1)=0$. Combining this observation with \thref{lem:partials} immediately implies the claimed inequalities. 
\end{proof}

\begin{lemma} \thlabel{lem:roots}
For all TCBA, the equation $g(p, q(p))=0$ has two distinct solutions:
    \begin{align}
    q_{\pm}(p) &:=\frac{-p (1-\alpha) \pm \sqrt{(1-\alpha) \left(b (1-p)^2- p (a (1-p)+p \alpha-1)\right)}}{(1-\alpha) ((1-a) p+b (1-p))}\label{eq:qpm}.
    \end{align}
Moreover, it holds that: 
\begin{enumerate}[label = (\roman*)]
    \item \label{q-<0} $q_-(p) <0$ for $p \in [0,1)$.
    \item \label{q+>1} $q_+(p) >1$ for $p \in [0,p_*)$.
    \item \label{q+<1} $q_+(p) <1$ for $p \in (p_*,1)$.
\end{enumerate}

\end{lemma}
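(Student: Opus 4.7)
The plan is to view $g(p,\cdot)=0$ as a quadratic equation in its second argument. Multiplying through by the denominator of \eqref{eq:g}, the equation reduces to
\begin{align}
N_p(v) := -(1-x)\bigl[b(1-p) + (1-a)p\bigr]\,v^{2} - 2(1-x)p\,v + (1-p) = 0,
\end{align}
whose leading coefficient is strictly negative for TCBA with $p\in(0,1)$ (and with $p=0$ whenever $b>0$, the only potentially degenerate corner). The quadratic formula then produces two roots, and after a short algebraic simplification of the discriminant---expanding gives $4(1-x)\bigl[b(1-p)^{2}+(1-a)p(1-p)+(1-x)p^{2}\bigr]$, and the identity $(1-a)p(1-p)+(1-x)p^{2} = -p\bigl[a(1-p)+px-1\bigr]$ places it in the form appearing under the square root in \eqref{eq:qpm}---these roots match $q_\pm(p)$. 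The same decomposition shows that the discriminant is strictly positive on $(0,1)$: the three summands are non-negative and are not all zero thanks to $a<1$ and $x<1$. So the two roots are real and distinct.

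For part \emph{(i)}, the numerator of $q_-(p)$ is $-p(1-x)-\sqrt{\,\cdot\,}$, which is strictly negative on $(0,1)$; at $p=0$ (with $b>0$) it reduces to $-\sqrt{(1-x)b}<0$. The denominator $(1-x)\bigl[(1-a)p+b(1-p)\bigr]$ is strictly positive under the standing TCBA assumptions, so $q_-(p)<0$ follows immediately.

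For parts \emph{(ii)} and \emph{(iii)}, the cleanest route avoids direct manipulation of the radical and instead leverages the sign information from Section~\ref{sec:g}. Since $N_p$ is a downward-opening parabola in $v$, we have $N_p(v)>0$ precisely on $(q_-(p),q_+(p))$. At $v=1$, the denominator of $g(p,\cdot)$ equals $2-a+b(1-x)>0$, so $g(p,1)$ and $N_p(1)$ share a sign. By \thref{lem:p*} and \thref{lem:partials}, $g(\cdot,1)$ is strictly decreasing with unique zero at $p=p_*$; consequently $N_p(1)>0$ for $p<p_*$ and $N_p(1)<0$ for $p>p_*$. In the first case $1\in(q_-(p),q_+(p))$, forcing $q_+(p)>1$; in the second case $1\notin(q_-(p),q_+(p))$, and since part \emph{(i)} yields $q_-(p)<0<1$, necessarily $q_+(p)<1$.

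The only genuinely fiddly step is the algebraic identity rewriting the discriminant into the form appearing in \eqref{eq:qpm}; this is routine but sign-error-prone. Beyond that, every claim follows directly from Lemmas \thref{lem:p*}, \thref{lem:partials}, and \thref{lem:g} together with the elementary geometry of a downward-opening parabola.
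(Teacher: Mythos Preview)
Your proof is correct and follows essentially the same approach as the paper: apply the quadratic formula to the numerator of $g$, verify the discriminant is positive, read off $q_-<0$ from the sign of its numerator and denominator, and then deduce the position of $q_+$ relative to $1$ from \thref{lem:p*} and \thref{lem:partials}. Your framing of (ii)--(iii) via the geometry of the downward-opening parabola $N_p$ is a slight rewording of the paper's direct use of $\partial_v g<0$, but the underlying argument is the same.
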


\begin{proof}
Since the numerator of $g$ is quadratic in $v$, the formula for $q_\pm(p)$ follows from the quadratic formula. 
Notice that the discriminant $D$ of \eqref{eq:qpm} can be rewritten as 
\begin{align}
\f{D}{1-\alpha} &= p (1- (a+b))+b (1-p)+ p^2 (a+b)-p^2 \alpha
\end{align}
After multiplying the $p(1-(a+b))$ term by $p$ we have for $p \in [0,1)$ that
\begin{align}
\f{D}{1-\alpha} &> p^2 (1- (a+b)) + b (1-p) + p^2 (a+b) - p^2 \alpha \\
&= b(1-p) + p^2(1-\alpha) \label{eq:D} \\
&>0.
\end{align}
Thus, $q_-$ and $q_+$ are distinct. 

Since the numerator of $q_-$ is negative and the denominator is positive, we immediately observe \ref{q-<0}. 
To deduce \ref{q+>1} and \ref{q+<1},  first notice that the uniqueness observation in \thref{lem:p*} along with the fact that $g(p_*,q_+(p_*))=0$ imply that $q_+(p_*) = 1$.  This observation and \thref{lem:partials} together imply \ref{q+>1} and \ref{q+<1}.
\end{proof}

\section{Regularity conditions} \label{sec:regularity}


The goal of this section is to prove the following proposition.

\begin{proposition} \thlabel{prop:clopen}
    $I:=\{ p \in (p_*,1) \colon q(p) <1 \} = (p_*,1)$.
\end{proposition}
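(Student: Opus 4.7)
The plan is to show that $I$ is both open and closed in the subspace topology on $(p_*, 1)$, and that $I \neq \emptyset$; since $(p_*, 1)$ is connected, this forces $I = (p_*, 1)$. The starting observation is that \thref{prop:q} forces $q(p) \in \{1\} \cup \{v : g(p,v) = 0\}$, while \thref{lem:roots} shows that for $p \in (p_*, 1)$ the only root of $g(p,\cdot)$ lying in $[0,1]$ is $q_+(p) < 1$ (since $q_-(p) < 0$). Moreover, $q_+$ is continuous in $p$ and strictly less than $1$ on $(p_*, 1)$, so the two possible values of $q(p)$ are separated by a continuous positive gap.

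To show $I^c \cap (p_*, 1)$ is open, I would follow the strategy indicated in the proof methods section and approximate $q$ from below using the continuous functions $q_k(p) := \P\bigl((0 \leftarrow \b)_{[0, x_k]}\bigr)$, which satisfy $q_k \uparrow q$ pointwise. If $p_0 \in I^c$, then $q(p_0) = 1$, so for $k$ large enough $q_k(p_0) > q_+(p_0)$. Continuity of $q_k$ and $q_+$ yields a neighborhood $U$ of $p_0$ on which $q_k > q_+$, whence $q \geq q_k > q_+$ on $U$; combined with $q \in \{1, q_+\}$, this forces $q \equiv 1$ on $U$.

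For the openness of $I$, I would derive a necessary and sufficient condition for $\{q(p) < 1\}$ phrased in terms of a weighted particle balance based on the quiver representation introduced in Section \ref{sec:notation}. Let $W_n(p)$ be the expected net count on $[0, x_n]$ of surviving blockades minus the total quiver size of surviving arrows. Since arrows can destroy multiple blockades in TCBA, the naive difference $(\#\text{blockades}) - (\#\text{arrows})$ is not superadditive when concatenating adjacent intervals; the point of the quiver weighting (formalized in \thref{lem:sa}) is to restore superadditivity. By the subadditive lemma, $W_n(p)/n \to \mu(p)$ exists, and I would show $q(p) < 1 \iff \mu(p) > 0$. Since $\mu$ is a monotone limit of continuous finite-volume quantities and the criterion is strict, $\{\mu > 0\}$ is open, proving $I$ is open. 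Nonemptiness of $I$ is supplied by \thref{lem:pc+}.

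The main obstacle is the openness of $I$: the HST argument exploits a clean superadditive comparison between surviving blockades and left arrows, which simply breaks in the coalescing setting because a single arrow may annihilate arbitrarily many blockades. Fixing this requires the quiver weighting scheme of \thref{lem:sa} and an efficient reformulation of the survival condition (see \thref{rem:sa}), and this is where essentially all the work beyond \cite{HST} must be placed. Once the criterion $\mu(p) > 0$ is established, the closing argument — combining the two topological halves with connectedness — is immediate.
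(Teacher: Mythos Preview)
Your proposal is correct and follows essentially the same route as the paper: both halves of the clopen argument, the $q_k \uparrow q$ approximation for openness of $I^c$, the quiver-weighted superadditivity (\thref{lem:sa}) feeding into an iff criterion (\thref{lem:iff}) for openness of $I$, and nonemptiness via \thref{lem:pc+}. The only cosmetic difference is that the paper writes $\{p:\theta(p)>0\}=\bigcup_k\{p:\E N_k>0\}$ directly as a union of open sets, whereas you pass through the Fekete limit $\mu=\sup_n W_n/n$; note that ``monotone limit'' is not quite the right justification there---what you actually need is that $\mu$ is a supremum of continuous functions, hence lower semicontinuous, which makes $\{\mu>0\}$ open.
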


\begin{proof}
It follows from \thref{lem:closed} and \thref{lem:open} that $\{ p \in (p_*,1) \colon q(p) <1 \}$ and its complement are both open in the subspace topology on $(p_*,1)$. Thus, $I = \emptyset$ or $I= (p_*,1)$. By \thref{lem:pc+}, $I$ is nonempty so the latter holds.
\end{proof}

\begin{lemma}\thlabel{lem:closed}
$I^c:=\{ p \in (p_*,1) \colon q(p) =1 \}$ is open in the subspace topology on $(p_*,1)$.
\end{lemma}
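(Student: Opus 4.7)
The plan is to realize $I^{c}$ as a countable union of open sets, each cut out by comparing a continuous approximation of $q$ from below against the continuous root function $q_{+}(p)$ from \thref{lem:roots}.

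First, I would introduce the truncations $q_{k}(p) := \P_{p}\bigl((0 \la \b)_{[0, x_{k}]}\bigr)$. Since the event on the right depends on the velocities of only finitely many particles, and the inter-particle spacings have distribution independent of $p$, each $q_{k}$ is a polynomial in $p$ and therefore continuous on $[0,1)$. The two approximation facts I would then need are (a) $q_{k}(p) \leq q(p)$ for all $p$ and $k$, and (b) $q_{k}(p) \to q(p)$ as $k \to \infty$. Property (b) follows from the observation that when $\{0 \la \b\}$ occurs with finite first-visit time $\tau$, only particles (and coalescence-generated offspring of particles) initially in $[0, 2\tau]$ can have influenced the responsible trajectory, since arrows move at unit speed; hence any $q_{k}$ with $x_{k}$ sufficiently large detects the event. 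Property (a) follows via a coupling realizing the restricted process on $[0, x_{k}]$ as a subsystem of the full process up until the first time a particle from $(x_{k}, \infty)$ would enter $[0, x_{k}]$: any visit to $0$ happening in the restricted process before that moment also happens in the full process.

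Second, for each $k \geq 1$ I would define
\begin{align}
    A_{k} := \{ p \in (p_{*},1) : q_{k}(p) > q_{+}(p)\}.
\end{align}
Both $q_{k}$ and $q_{+}$ are continuous on $(p_{*},1)$ (the latter by its explicit formula together with the positivity of the discriminant shown in the proof of \thref{lem:roots}), so each $A_{k}$ is open in $(p_{*},1)$. The claim is then that $I^{c} = \bigcup_{k} A_{k}$. For $\supseteq$: any $p \in A_{k}$ satisfies $q(p) \geq q_{k}(p) > q_{+}(p)$ by (a); combined with \thref{prop:q} and the fact that $q_{-}(p) < 0$ (\thref{lem:roots}(i)), this leaves only $q(p) = 1$, so $p \in I^{c}$. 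For $\subseteq$: any $p \in I^{c}$ has $q(p) = 1 > q_{+}(p)$ by \thref{lem:roots}(iii), and (b) then forces $q_{k}(p) > q_{+}(p)$ for all $k$ sufficiently large.

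The main obstacle is Property (a): in the coalescing setting an added particle at $x_{k+1}$ can either create or destroy the first-visit trajectory to $0$, so naive stochastic monotonicity in $k$ is false. One must instead argue at the level of an individual first-visit trajectory, which depends only on finitely many initial particles, and verify that any such trajectory realized in the restricted process is faithfully reproduced in the full process. All remaining ingredients are elementary continuity considerations or direct consequences of \thref{prop:q} and \thref{lem:roots}.
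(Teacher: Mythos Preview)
Your proposal is correct and follows essentially the same argument as the paper: approximate $q$ from below by the continuous functions $q_k$, use that $q(p)=1 \iff q(p)>q_+(p)$ on $(p_*,1)$, and write $I^c$ as the union of the open sets $\{q_k>q_+\}$. Your worry about property~(a) is somewhat overstated: the configuration-wise inclusion $\{(0\la\b)_{[0,x_k]}\}\subseteq\{(0\la\b)_{[0,x_{k+1}]}\}$ \emph{does} hold, because any visit to $0$ in the process on $[0,x_k]$ occurs at a time $\le x_k$ and, by unit-speed propagation, is determined entirely by the initial data on $[0,x_k]$---so it persists when particles are added to the right (this is exactly the ``individual trajectory'' argument you sketch, and is also what the paper invokes via the monotonicity of blockade survival in the interval).
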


\begin{proof}

Let $g$  be as in \eqref{eq:g}. \thref{lem:g} gives that there are two distinct solutions $v\in \{q_-, q_+\}$ to $g(p,v) =0$. \thref{lem:g} (i) states that  $q_-<0$ for all $p$. As $q(p)$ is a probability, we cannot have $q(p) = q_-(p)$. The remaining possibilities are that $q=1$ or $q=q_+$. 

Let $q_k = \P( (0 \leftarrow \b)_{[0,x_k]})$. 
The quantities $q_k \uparrow q$ only involve the initial configurations concerning finitely many particles. Conditioning on the velocities of particles in the configuration and integrating to account for interdistances between particles gives a polynomial in $p$. 
It follows from \thref{lem:g} \ref{q+<1} that 
\begin{align}
\text{$q(p) =1$ if and only if $q(p) >q_+(p)$ for $p>p_*$.} \label{eq:iffq}
\end{align}
We use \eqref{eq:iffq} to give the following characterization of the set on which $q(p) =1$:
\begin{align}
    \{ p \in (p_*,1) \colon q(p) =1 \}=  \{ p \in U \colon q(p) > q_+(p) \}  &= \bigcup_{k=1}^\infty \{p \in U \colon  q_k(p) > q_+(p) \}.
\end{align}
Continuity of the $q_k$ and $q_+(p)$ ensures that the sets $\{p \in U \colon q_k(p) > q_+(p)\}$ are open. Thus, so is the union which is equal to $I^c$. 
\end{proof}

\subsection{Superadditivity}
The idea underlying the upcoming \thref{lem:open}, that $I$ is open, was developed in \cite{ST}, and extended in \cite[Lemma 10]{HST}. A more general version tailored to the asymmetric setting is proven in \cite{JL}. Coalescence makes for new challenges. Before getting to the lemma, we introduce some additional notation.

Let $$B(j,k) = \sum_{i=j}^k \ind{ (\B_i \text{ survives})_{[x_j,x_k]} }$$ be the number of blockades that survive in the process restricted to $[x_j,x_k]$. Note that $B(j,k)$ only counts surviving blockades from the initial configuration of particles; blockades generated from $\R - \L \implies \B$ reactions do not contribute. We use the quiver interpretation of TCBA described in Section \ref{sec:quiver}. 

To briefly summarize, each arrow in the process, call such particles \emph{original arrows}, carries a \emph{quiver} of $\text{Geometric}(1-\alpha)$ many \emph{sharp arrows}. When the arrow meets a blockade one of the sharp arrows mutually annihilates with the blockade, but the original arrow along with its quiver of any remaining sharp arrows continue moving. The original arrow is destroyed at the moment its last sharp arrow is destroyed. When two original arrows meet, some arbitrary rule is used to decide the reaction. If an original arrow is destroyed in a $\R - \L$ collision, its quiver of sharp arrows is also destroyed. 

In the quiver formulation, we view sharp arrows as distinct particles that follow the same trajectory as the original arrow. These arrows track how many blockades the original arrow could potentially destroy. Let $A(j,k)$ be the number of left and right sharp arrows that survive in $[x_j, x_k]$.  Define the count
\begin{align}
N(j,k) &= B(j,k) - A(j,k).  \label{eq:quiver}
\end{align}
This is in some sense a worst-case weighting of surviving blockades; every surviving sharp arrow is treated like it will ultimately destroy a blockade.

We now show that $N$ is superadditive after merging intervals.
In the analogue \cite[Lemma 12]{HST}, it is required that the right interval have no surviving right arrows. We show that, with our more general weighting scheme, superaddititivy holds with no hypotheses about surviving arrows. This extra level of generality ends up being crucial for proving the upcoming \thref{lem:iff}. See \thref{rem:sa}.

\begin{lemma} \thlabel{lem:sa}
Let $k < \ell$ be positive integers. For any initial assignment of particle types and spacings to $(\b_i)_{i \in \mathbb Z}$ we have
\begin{align}N (1, \ell) \geq  N (1, k) + N (k + 1, \ell).\label{eq:sa}
\end{align}
\end{lemma}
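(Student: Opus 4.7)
The plan is to prove the pathwise inequality by coupling the combined process on $[x_1,x_\ell]$ with a two-phase auxiliary dynamics built from the restricted processes, and then invoking a monotonicity principle for $N$ along the quiver dynamics.

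I would first verify that $N(t)=B(t)-A(t)$ is non-decreasing along any TCBA trajectory under the quiver formulation. The crucial point is that every collision falls into exactly one of two types, each having a clean effect on $N$. A blockade--sharp-arrow annihilation decrements both $B$ and $A$ by one, preserving $N$. An arrow--arrow collision (whether mutual annihilation, coalescence into a single surviving arrow, or a $\R-\L\implies \B$ event) destroys the quivers of the losing side(s) but does not touch any counted blockade, since any new blockade arising from $\R-\L\implies \B$ is not in $B$; hence such a collision strictly increases $N$. This monotonicity is exactly why the quiver weighting captures the right quantity: it remains well-behaved even when an original arrow destroys several blockades before itself being stopped.

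Next I would set up the coupling. Using the same initial data (particle types, spacings, quiver sizes, and pre-assigned collision-outcome randomness), run the restricted processes on $[x_1,x_k]$ and $[x_{k+1},x_\ell]$ independently until each exhausts its collisions. The union of the two resulting surviving configurations has $N$-value equal to $N(1,k)+N(k+1,\ell)$. Then perform a merge phase in which the surviving particles (each at its current position, with any remaining sharp arrows in its quiver) continue to evolve under TCBA. By the monotonicity step, the $N$-value at the end of the merge phase is at least $N(1,k)+N(k+1,\ell)$.

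The main obstacle is to show that $N(1,\ell)$ dominates the terminal $N$-value of this auxiliary process, since the combined dynamics does not literally factor as \emph{restrict-then-merge}: an inter-interval event in the combined process can pre-empt an intra-interval collision that would occur in the corresponding restricted process, so the two collision sequences need not agree. I expect to handle this via a rearrangement argument in the spirit of \cite{ST, HST}. The continuity of the spacing distribution ensures almost surely that there are no simultaneous collisions, so one can re-sequence the combined process's collisions by resolving intra-interval events within each $[x_1,x_k]$ and $[x_{k+1},x_\ell]$ first and matching them against the corresponding restricted processes, with the remaining cross-interval collisions identified with the merge phase. Under the quiver weighting, each intra-interval substitution in this reordering either preserves $N$ or increases it, so the rearrangement does not drop below $N(1,k)+N(k+1,\ell)$. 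The quiver weighting is essential here: without it, the extra blockade destructions a single arrow can perform would force the surviving-arrow hypothesis present in \cite[Lemma 12]{HST}.
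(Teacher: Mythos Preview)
Your monotonicity observation that $N(t)=B(t)-A(t)$ is nondecreasing along any quiver-TCBA trajectory is correct and is indeed the engine of the argument. The gap is in your third step. The restrict-then-merge dynamics and the true combined dynamics on $[x_1,x_\ell]$ do not differ merely by a reordering of the same collision events; they can have \emph{disjoint} collision sets. For a concrete instance take $k=2$, $\ell=4$, particles $\R_1,\B_2,\L_3,\L_4$ with all quivers of size one, and spacings with $x_3-x_2<x_2-x_1$. In the restricted processes the only collision is the intra-interval event $\R_1-\B_2$; in the combined process the only collisions are the cross-interval events $\L_3-\B_2$ and then $\R_1-\L_4$. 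There is nothing to ``resolve intra-interval first'' in the combined process because it has no intra-interval events, and the two dynamics terminate in different configurations (restrict-then-merge leaves $\L_3,\L_4$; the combined process, if $\R_1-\L_4$ mutually annihilate, leaves nothing). So the proposed rearrangement cannot be a bijection between the two collision sequences, and the inequality you still need, that $N(1,\ell)$ dominates the terminal $N$ of restrict-then-merge, is essentially the lemma itself. Your appeal to ``almost surely no simultaneous collisions'' is also out of place here: the statement is pathwise, for every initial assignment.

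The paper avoids a merge phase altogether. It runs only the combined process on $[x_1,x_\ell]$, but tracks two evolving sets $(\AA_t,\BB_t)$ initialized at the survivor sets $\AA^I\cup\AA^J$ and $\BB^I\cup\BB^J$ of the restricted processes. As the combined process proceeds, each time a sharp arrow currently in $\AA_t$ is destroyed the sets are updated; the delicate case is when such an arrow hits a blockade not in $\BB_t$, whereupon it is swapped for a specific ``unleashed'' sharp arrow of the opposite direction (if one exists). One then verifies (i) that $|\BB_t|-|\AA_t|$ is nondecreasing---this is your monotonicity, but phrased for the tracked sets inside the combined process---and (ii) that at the terminal time $|\BB_T|=B(1,\ell)$ and $|\AA_T|=A(1,\ell)$. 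Since $|\BB_0|-|\AA_0|=N(1,k)+N(k+1,\ell)$, the inequality follows without ever comparing two different dynamics.
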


\begin{proof}

Let $I = [x_1,x_k]$ and $J= [x_{k+1}, x_\ell]$. 
We show that when sharp arrows that survived in the process on $I$ or $J$ are destroyed, this has at worst a net-neutral effect on the difference between surviving blockades and sharp arrows. We track these changes in real time.

Using any index system that uniquely identifies blockades and sharp arrows, define $\BB^I, \BB^J, \AA^I$, and $\AA^J$ to be the sets of blockades and sharp arrows that survive in the processes restricted to $I$ and $J$. The sets $\BB^I$ and $\BB^J$ only include blockades from the original configuration; blockades generated from $\R - \L \implies \B$ reactions are not counted.
We will define a pair of set-valued processes $(\AA_t,\BB_t)$ with the following properties:
    \begin{enumerate}[label = (\roman*)]
        \item $\BB_t$ is the set of blockades from $\BB^I \cup \BB^J$ that are still surviving at time $t$ in the combined process on $I \cup J$. 
        \item $\AA_0 = \AA^I  \cup \AA^J$.
         \item $\mathcal A_t$ is non-increasing and $\BB_t$ may decrease only when a decrease of the same magnitude occurs in $\AA_t$.
        \item For $T:= x_\ell - x_1$, we have $|\BB_T| = B(1,\ell)$ and  $|\AA_T| = A(1,\ell)$.
    \end{enumerate}
Using the characterization of $N(1,\ell)$ at \eqref{eq:quiver}, these properties give \eqref{eq:sa} since
\begin{align}
    N(1,k) + N(k+1,\ell) \overset{(i), (ii)}= |\BB_0| - |\AA_0| 
    \overset{(iii)}\leq |\BB_T| - |\AA_T| \overset{(iv)}= N(1,\ell).
\end{align}
It remains to define $(\AA_t,\BB_t)$ and prove that it satisfies (i), (ii), (iii), and (iv). 

We take property (i) as the definition of $\BB_t$ and (ii) as the definition of $\AA_0$. Let $t_0=0$ and $t_1>t_0$ be the first time in the combined process on $I \cup J$ that a sharp arrow, say $\p$, from $\mathcal A_{t_0}$ is annihilated. If the collision involves two sharp arrows from $\mathcal A_{t_0}$, let $\p$ be the left arrow. There are three possible ways that $\p$ is destroyed:
\begin{enumerate}[label = (\Roman*)]
    \item If $\p$ is annihilated by a blockade $\B_i
    $ counted by $\BB_{t_0}$, then set $\AA_{t_1} = \AA_{t_0} \setminus \{\p\}$ and $\BB_{t_1} = \BB_{t_0} \setminus \{\B_i\}$. 
    \item If $\p$ is annihilated from hitting a blockade $\B$ that does not belong to $\mathcal B_{t_0}$, then $\BB_{t_1}=\BB_{t_0}$.
        \begin{enumerate}[label = (\alph*)]
            \item Let $\p'$ be the first sharp arrow that: reaches the location of $\B$ in the process on $I \cup J$, has the opposite direction of $\p$, and is not counted by $\AA_{t_0}$. Set $\AA_{t_1} = (\AA_{t_0} \setminus \{\p\} ) \cup \{ \p'\}$.
            \item If there is no such $\p'$,
            then set $\AA_{t_1} = \AA_{t_0} \setminus \{\p\}.$ 
        \end{enumerate}
    In this case, the blockade $\B$ is generated by an arrow-arrow collision between times $t_0$ and $t_1$. In (II)(a), we account for the case that $\p'$ would have been annihilated by $\B$ in the combined process if not for $\p$. Note that $\p'$ may or may not have already been in $\AA_{t_0}$.
    \item If $\p$ is destroyed by another arrow $\p''$, then define $\mathcal A_{t_1}$ to be $\AA_{t_0}$ minus all of the sharp arrows in the same quiver as $\p$ as well as---if the reaction is mutual annihilation and $\p'' \in \mathcal A_{t_0}$---all arrows in the same quiver as $\p''$. Set $\BB_{t_1} = \BB_{t_0}$.
\end{enumerate}

Iterate this procedure by considering the next sharp arrow from $\AA_{t_j}$ to be destroyed at time $t_{j+1}>t_j$. This gives new values of $\AA_{t_{j+1}}$ and $\BB_{t_{j+1}}$ according to whichever of (I), (II), or (III) occurs. We make the process right continuous for $t \geq 0$ by setting $\AA_t = \AA_{t_j}$ for all $t\in[ t_{j},t_{j+1})$ and $j \geq 0$. 
By construction, we have (iii). 

To show (iv), first notice that, since arrows have unit speed, $\AA_t$ and $\BB_t$ must fixate after the length of the interval $T= x_{\ell} - x_1$ has elapsed. $\AA_t$ tracks all potentially surviving sharp arrows. By construction, $\AA_T$ does not contain non-surviving sharp arrows; when a sharp arrow is annihilated, it is removed from the process. The only time a new sharp arrow is added to $\AA_t$ is when (II.a) occurs, in which case the added sharp arrow has survived thus far. 

Next, we show that \emph{all} surviving sharp arrows in the combined process are contained in $\AA_T$. First observe that $\AA_T$ contains all sharp arrows from $\AA_0$ that survive in the combined process, as an arrow is removed only if it is annihilated. If a sharp arrow survives in the combined process but is not in $\AA_0$, then it must be because the particle that it was going to be annihilated by in the separate process is annihilated by an arrow from the other interval in the combined process. Recall that although blockades can be generated through collisions, arrows cannot. In fact, the ``unleashing'' of an arrow is only possible when it was going to be annihilated with a blockade and this is captured in case (II.a) when the unleashed arrow is added to $\AA_t$. This gives the first part of (iv), that $\AA_T = A(1,\ell)$.

To show the second half of (iv), that $|\BB_T|=B(1,l)$, we first note that $\BB_0$ contains all possible surviving blockades in the combined process -- non-surviving blockades in the separate processes would be destroyed by the same sharp arrows that destroyed them in the separate processes if not by other arrows from opposite intervals prior. Since the dynamic construction of $\BB_t$ only removes annihilated blockades, $\BB_T$ consists of all surviving blockades by the end of the combined process.
\end{proof}


\subsection{A necessary and sufficient condition for blockade survival}
\begin{lemma} \thlabel{lem:iff}
Let $N_k = N(1,k)$ with $N$ defined at \eqref{eq:quiver}. For all $p \in (0,1)$ it holds that
\begin{align}
\theta(p) >0 \iff \text{ there exists $k\geq1$ with } \mathbb E N_k >0 \label{eq:iff}. 
\end{align}
\end{lemma}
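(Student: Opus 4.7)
The strategy is to combine the superadditivity from \thref{lem:sa} with Fekete's lemma, recasting ``$\E N_k > 0$ for some $k$'' as a linear growth condition on $(\E N_n)_n$, and then identify this with $\theta(p) > 0$ through a coupling with the two-sided process.

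\textbf{Setup.} Iterated application of \thref{lem:sa} combined with translation invariance of the i.i.d.\ particle-type and spacing configuration yields the superadditivity $\E N_{n+m} \geq \E N_n + \E N_m$. Fekete's lemma therefore gives
\begin{align*}
L := \lim_{n\to\infty} \frac{\E N_n}{n} = \sup_{n\geq 1} \frac{\E N_n}{n} \in [-1,1],
\end{align*}
so the condition ``$\E N_k > 0$ for some $k$'' is equivalent to $L > 0$. Kingman's subadditive ergodic theorem, applied to $-N_n$ (using ergodicity of the doubly-infinite i.i.d.\ configuration), further upgrades this to $N_n / n \to L$ almost surely. It therefore suffices to prove $L > 0 \iff \theta(p) > 0$.

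\textbf{($\Leftarrow$)} Assume $L > 0$. The plan is to show $q(p) < 1$, equivalently $\theta = 1-q^2 > 0$, by constructing a positive-probability event on which the origin is never visited in the one-sided process on $(0,\infty)$. Pick $n$ with $\P(N_n \geq nL/2) \geq 1/2$, guaranteed by the a.s.\ convergence. Couple the restricted process on $[x_1, x_n]$ to the one-sided process on $(0,\infty)$ by adjoining the particles of $(x_n, \infty)$. The key observation is that each left sharp arrow entering $[x_1, x_n]$ from the right decrements the surplus $N_n = B_n - A_n$ by at most one unit: it either destroys a surviving blockade (consuming one unit of $B_n$) or is annihilated by a surviving right sharp arrow of $A_n$. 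Hence, on $\{N_n \geq nL/2\} \cap \{\text{fewer than } nL/2 \text{ left sharp arrows enter } [x_1, x_n] \text{ from the right}\}$, at least one blockade of $[x_1, x_n]$ still survives in the one-sided process, blocking the origin and giving $q < 1$. The entering flux is a translation-invariant random variable, and Markov's inequality paired with a uniform-in-$n$ bound on its expectation (established via symmetry and the quiver formulation) yields the claim.

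\textbf{($\Rightarrow$)} Assume $\theta(p) > 0$. Translation invariance and ergodicity of the two-sided process give surviving-blockade density equal to $p\theta > 0$. Combined with the a.s.\ destruction of arrows in the two-sided process (see the proof of \eqref{eq:prml}), this density transfers to the restricted processes via a monotonicity argument rooted in \thref{lem:sa}: for large $k$, the worst-case weighted count $N_k$ asymptotically matches the true blockade-survival count in the full process, so $L \geq p\theta > 0$. Consequently $\E N_k > 0$ for sufficiently large $k$.

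\textbf{Main obstacle.} The technical heart lies in the coupling for ($\Leftarrow$): while the quiver weighting $N = B - A$ is precisely calibrated so that each external incoming sharp arrow consumes at most one unit of surplus, proving a uniform-in-$n$ bound on the expected number of left sharp arrows crossing $x_n$ in the one-sided process is subtle and requires combining translation invariance, reflection symmetry, and the a.s.\ destruction of arrows in the two-sided process.
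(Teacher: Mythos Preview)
Your forward direction ($\theta(p) > 0 \Rightarrow \exists k$ with $\E N_k > 0$) matches the paper's argument in spirit: the paper shows directly that $\E B(1,k) \geq kp\,\theta(p) \to \infty$ via monotonicity of blockade survival in the interval, while $\E A(1,k)$ stays bounded by $\frac{2}{(1-q)(1-x)}$ using the geometric quiver sizes and the renewal structure for arrivals at $0$ (valid precisely because $q<1$). Your Fekete/Kingman repackaging into the statement $L>0$ is pleasant but inessential.

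The reverse direction, however, is circular. You propose to bound the expected number of left sharp arrows entering $[x_1,x_n]$ from the right, uniformly in $n$. By translation invariance the flux coming from the process on $(x_n,\infty)$ alone has the same law for every $n$: it is the total number of sharp left arrows reaching $0$ in the one-sided process on $(0,\infty)$. If $q(p)=1$, the renewal argument forces \emph{infinitely many} original left arrows to reach $0$ almost surely, so the number of entering sharp arrows is a.s.\ infinite and no Markov bound exists. Thus your ``uniform-in-$n$ bound on its expectation'' presupposes $q<1$, which is exactly the conclusion you are after. Invoking a.s.\ arrow destruction in the \emph{two}-sided process does not rescue this: that fact gives no control on the one-sided flux into a half-line.

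The paper avoids the issue entirely. Instead of bounding an incoming flux, it reveals the configuration in i.i.d.\ blocks of length $k$ (with $\E N_k>0$), forces the first block to consist of $k$ blockades so that $\tilde N^{(1)}=k$, and uses the positive-drift random walk fact that $\tilde N^{(2)}+\cdots+\tilde N^{(n)}>0$ for all $n\ge 2$ with positive probability. Superadditivity (\thref{lem:sa}) then gives $N(1,1+nk)>k$ for every $n$, and a short counting argument shows that if any left arrow reached $0$ one would have $N(1,1+(n+1)k)\le k$ for some $n$, a contradiction. No control of arrows arriving from infinity is ever needed.
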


\begin{proof}
The forward implication is analogous to \cite[Proposition 11]{HST}. The key observation is that for any $x_i$ in an interval $I$, $P ((\B_i \text{ survives})_I)$ is decreasing in $I$. This continues to hold with the coalescence rules in TCBA. To see why, suppose that we have a configuration of particles in $I$ with $(\B_i \text{ does not survive})_I$. Keeping this configuration fixed, there is no manner in which one could add particles outside of $I$ to intercept the particle that destroys $\B_i$ in time.
This monotonicity ensures that 
\begin{align}
\E B(1,k) = \sum_{i=1}^k \P((\B_i \text{ survives})_{[x_1,x_k]})\geq \sum_{i=1}^k \P((\B_i \text{ survives})_{\mathbb R})= 
kp\,\theta(p) \to \infty 
\end{align}
with $B(1,k)$ from \eqref{eq:quiver}.

Let $\cev A_k$ be the sharp left arrows counted by $A(1,k)$ from \eqref{eq:quiver} and $\vec A_k$ the sharp right arrows  so that $A(1,k) = \cev A_k + \vec A_k$. We will next show that $\E \cev A_k$ and $\E \vec A_k$ are bounded. Symmetry ensures that $\E \cev A_k = \E \vec A_k$, so we only provide the argument for $\cev A_k$. For each $i\ge 1$, the event that $\L_{i}$ survives from the restriction to $[x_{1},x_{k}]$ is non-decreasing in $k$, since such arrows do not interact with any particles to their right. This monotonicity implies that  $\cev A_k \uparrow \cev A_\infty$.
Let $T$ denote the number of surviving left arrows counted from $\cev{A}_{\infty}$ so that $\L_{i_1},\hdots, \L_{i_T}$ reach $0$ for some integers $i_{1},\ldots,i_{T}\ge 1$. When $\theta(p)>0$, by using a renewal property of TCBA, $T$ is a geometric random variable supported on $0,1,\hdots$ with parameter $1-q$. Hence we have 
\begin{align}
   \cev A_k \uparrow  \cev A_\infty:= \textstyle \sum_{j=1}^T G_j ,
\end{align}
where each $G_j$ an independent geometric random variable supported on $1,2,\hdots$ with parameter $1-\alpha$. The $G_j$ count how many additional sharp arrows remain in the quiver of the $j$th arrow to arrive to $0$. $G_j$ has a geometric distribution because of the memoryless property. 
Hence, 
\begin{align}
\mathbb E A(1,k) = 2 \mathbb E \cev A_k \leq \f{ 2}{1-q} \f 1 {1-\alpha} < \infty.
\end{align}
Since $\E B(1,k)\rightarrow \infty$ as $k\rightarrow \infty$, we have $\E N_{k}=\E B(1,k) - \E A(1,k)>0$ for some large $k$. This shows the forward implication of \eqref{eq:iff}.


Towards proving the reverse implication of \eqref{eq:iff}, suppose that $k\geq 1$ is such that $\mathbb E N_k >0$. We reveal the configuration on $k$ consecutive particles at a time. To this end, let $\tilde N^{(i)} = N((i-1)k+1,ik)$ and denote  $\tilde{S}_{n}=\sum_{i=1}^{n} \tilde N^{(i)}$, $n\ge 1$.   Notice that the $\tilde N^{(i)}$ are i.i.d.\ with common distribution $N_k$ and $\E N_k = \E \tilde{N}^{(1)}>0$. Hence $(\tilde{S}_{n}-\tilde{S}_{1})_{\ge 1}$ is a random walk with positive drift, so it stays strictly above $0$ with  positive probability. Independently from this event, with positive probability the first $k$ particles right of $0$ are blockades, i.e.,  $\tilde N^{(1)} = k$. Applying \thref{lem:sa} gives that
\begin{align}N(1,nk) \ge N^{(1)} + (\widetilde{S}_{n}-\widetilde{S}_{1}) >k,\quad \text{ for all } n \geq 2\label{eq:saa}
\end{align}
with positive probability. 

On the event that \eqref{eq:saa} holds, we claim that $0$ is never visited by a left-moving particle. Indeed, consider the $(n+1)$st reveal of the $k$ particles from the interval $J_{n+1} := [x_{1+nk}, x_{(n+1)k}]$. Since $N(1,nk) >k$, there are least $k+1$ surviving blockades in $I_n:=[x_1, x_{nk}]$. If $0$ is reached by one of the particles from $J_{n+1}$, then no blockades survive in $I_n$. However, at most $k$ new blockades are introduced by $J_{n+1}$. So, if this occurs, we would have $N(1,(n+1)k) \leq k$, which contradicts \eqref{eq:saa}. It follows that $\theta(p)>0$ since $0$ is never visited with at least the probability that \eqref{eq:saa} occurs.
\end{proof}

\begin{remark}\thlabel{rem:sa}
When we apply \thref{lem:sa} at \eqref{eq:saa} it is important that we have a formulation of superadditivity that accounts for the effects of surviving right arrows. Unlike the approach used in \cite{HST}, we cannot extend each interval of size $k$ a random amount to eliminate any surviving right-arrows without changing the distribution of $\tilde N^{(i)}$. Indeed, expanding the interval until the surviving right arrows are removed may bring in left arrows that not only destroy the surviving right arrows, but also some of the blockades in the initial segment. Our formulation of subadditivity could be applied to the argument in \cite{HST} and would slightly simplify their proof of the analogue of \thref{lem:iff}.  

One might further wonder why we can get away with what appears to be a stronger necessary condition ($\theta>0 \implies \exists k \colon \E[B_k - (\vec A_k + \cev A_k)] > 0$) than that observed in \cite{HST} ($\theta>0 \implies \exists k \colon \E [B_k - \cev A_k] >0$). 
As the proof of \thref{lem:iff} reveals, there are $O(1)$ blockades and $\Omega(n)$ surviving blockades. Thus, it does not matter if we include the worst-case remaining impact from the ``dust'' of surviving arrows. For similar reasons we do not need to include the blockades resulting from $\R - \L \implies \B$ reactions in $B_k$. 
\end{remark}

\begin{lemma} \thlabel{lem:open}
$\{ p \in (0,1) \colon q(p) <1 \}$ is open in the subset topology on $(0,1)$.
\end{lemma}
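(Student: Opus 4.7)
The plan is to invoke \thref{lem:iff} to recast the set $\{p : q(p) < 1\}$ as a countable union of simpler sets and then verify continuity on each. Since $\theta(p) = 1 - q(p)^2$ and $q(p) \in [0,1]$, we have $q(p) < 1 \iff \theta(p) > 0$. Combining this equivalence with \thref{lem:iff} yields
\begin{align}
\{p \in (0,1) : q(p) < 1\} = \bigcup_{k \geq 1} \{p \in (0,1) : \mathbb{E}_p N_k > 0\},
\end{align}
so it suffices to prove openness of each set on the right, which in turn follows once we establish continuity of $p \mapsto \mathbb{E}_p N_k$ for each fixed $k$.

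The key step will be to argue that $p \mapsto \mathbb{E}_p N_k$ is in fact a polynomial in $p$. For fixed $k$, the random variable $N_k = B(1,k) - A(1,k)$ is determined by only four sources of randomness: (i) the velocities of $\b_1, \dots, \b_k$; (ii) their spacings; (iii) the quiver sizes $\sigma_1, \dots, \sigma_k$; and (iv) the collision resolution randomness for arrow--arrow meetings among these particles. Among these, only (i) carries $p$-dependence, with each $\b_i$ independently a blockade with probability $p$ and an arrow of each direction with probability $(1-p)/2$. Integrability follows from the crude bound $|N_k| \leq k + \sum_{i=1}^k \sigma_i$ together with $\mathbb{E}\sigma_i = 1/(1-x) < \infty$. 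Conditioning on a type assignment $\vec{v} \in \{-1,0,1\}^k$, the conditional expectation $\mathbb{E}[N_k \mid \text{types} = \vec{v}]$ is $p$-free, so
\begin{align}
\mathbb{E}_p N_k = \sum_{\vec{v} \in \{-1,0,1\}^k} p^{m(\vec{v})} \left(\tfrac{1-p}{2}\right)^{k - m(\vec{v})} \mathbb{E}[N_k \mid \text{types} = \vec{v}],
\end{align}
where $m(\vec{v}) = \#\{i : v_i = 0\}$. This is a finite polynomial in $p$, hence continuous.

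Given this continuity, if $p_0$ lies in $\{p : q(p) < 1\}$, one picks $k$ with $\mathbb{E}_{p_0} N_k > 0$ and uses continuity to obtain a neighborhood of $p_0$ on which $\mathbb{E}_p N_k$ remains positive, placing that neighborhood inside $\{p : q(p) < 1\}$. I do not anticipate a serious obstacle here: \thref{lem:iff} already packages the substantive probabilistic content, and the remaining work is just $L^1$ bookkeeping using the quiver formulation from Section \ref{sec:notation}. The only small care is to ensure one conditions on the $p$-independent sources of randomness before applying Fubini, which is licensed by the integrability bound above.
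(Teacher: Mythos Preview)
Your proposal is correct and follows essentially the same approach as the paper: invoke \thref{lem:iff} to write the set as $\bigcup_{k\ge 1}\{p:\E_p N_k>0\}$ and then observe that $p\mapsto \E_p N_k$ is a polynomial in $p$ (the paper simply asserts this is ``easily seen,'' whereas you supply the explicit conditioning and integrability details).
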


\begin{proof}

Using \thref{lem:iff}, we have 
$$\{ p \in (p_*,1) \colon \theta(p) >0\} = \bigcup_{ k \geq 1} \{ p \in (p_*,1) \colon \E N_k >0\}.$$
The function $p \mapsto \E N_k$ is easily seen to be a finite polynomial in $p$, and so the sets in the union are open. 
\end{proof}

\begin{lemma} \thlabel{lem:pc+}
For TCBA, it holds that $$p_c^+\leq \frac{1}{2-\alpha} <1.$$
\end{lemma}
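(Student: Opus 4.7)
The plan is to apply \thref{lem:iff}, which reduces the bound $p_c^+ \leq 1/(2-x)$ to the statement: for every $p > 1/(2-x)$ there exists $k \geq 1$ with $\E N_k > 0$. Since the standing assumption $x<1$ forces $2-x>1$, the strict inequality $1/(2-x) < 1$ is automatic. I will in fact show $\E N_k > 0$ for \emph{every} $k \geq 1$, via a deterministic, sample-path lower bound on $N_k$ that counts only particles of the \emph{initial} configuration and discards the gains from the coalescence dynamics.

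Let $\hat B$ and $\hat A$ denote the numbers of original blockades and of (initial) sharp arrows that are destroyed in the process restricted to $[x_1,x_k]$. Writing surviving counts as initial counts minus destroyed counts gives
\begin{align*}
N_k \;=\; B(1,k)-A(1,k) \;=\; \bigl(\text{initial blockades in }[x_1,x_k]\bigr) \;-\; \bigl(\text{initial sharp arrows in }[x_1,x_k]\bigr) \;+\; (\hat A - \hat B).
\end{align*}
The key (and only non-routine) step is the deterministic inequality $\hat A \geq \hat B$. This will follow from the quiver interpretation introduced in Section~\ref{sec:notation}: each destroyed original blockade is annihilated by exactly one sharp arrow, which is itself consumed in that collision, so these collisions contribute equally to $\hat B$ and to $\hat A$. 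All other sharp-arrow destructions---namely, (i) a sharp arrow colliding with an $\h$-blockade produced by some $\R \lrlb \L$ reaction, and (ii) an entire quiver being wiped out when its parent arrow is destroyed in an arrow--arrow collision---contribute to $\hat A$ but not to $\hat B$. Hence $\hat A \geq \hat B$ and
\begin{align*}
N_k \;\geq\; \bigl(\text{initial blockades in }[x_1,x_k]\bigr) \;-\; \bigl(\text{initial sharp arrows in }[x_1,x_k]\bigr).
\end{align*}

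Taking expectations is then a short first-moment calculation. Among the $k$ initial particles $\b_1,\dots,\b_k$, each is independently a blockade with probability $p$ or an arrow with probability $1-p$; conditional on being an arrow, its quiver has size $\mathrm{Geometric}_0(1-x)$ with mean $1/(1-x)$. By independence,
\begin{align*}
\E N_k \;\geq\; kp \;-\; \frac{k(1-p)}{1-x} \;=\; \frac{k\bigl(p(2-x)-1\bigr)}{1-x},
\end{align*}
which is strictly positive for every $p > 1/(2-x)$. Applying \thref{lem:iff} yields $\theta(p) > 0$, equivalently $q(p) < 1$, for every such $p$, so by definition $p_c^+ \leq 1/(2-x) < 1$. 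The main obstacle is simply verifying $\hat A \geq \hat B$ carefully against all three reaction types in \eqref{eq:rules}; once the bookkeeping is pinned down, the rest is immediate.
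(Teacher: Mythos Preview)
Your argument is correct and follows the same strategy as the paper: invoke \thref{lem:iff} and show $\E N_k>0$ for $p>1/(2-x)$ via the first-moment count $kp-k(1-p)/(1-x)$. The paper simply takes $k=1$, where a single particle cannot collide with anything, so $N_1$ is \emph{exactly} the initial-blockade count minus the initial sharp-arrow count and $\E N_1=p-(1-p)/(1-x)$ with no bookkeeping required; your deterministic inequality $\hat A\ge\hat B$ for general $k$ is sound but unnecessary, since at $k=1$ both sides vanish and your lower bound becomes an equality.
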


\begin{proof}

By \thref{lem:iff}, it suffices to prove that $\E N_1 >0$ for $p$ large enough. We can easily compute
$$\E N_1 = p - (1-p) \f{1}{1-\alpha}.$$
This is strictly greater than $0$ for $p > 1/(2- \alpha)<1$ as claimed. 
\end{proof}

\section{Proof of \thref{thm:sc}} \label{sec:main} 
\begin{proof}
Let $p_*$ be as defined at \eqref{eq:p*}. We will write $q$ in place of $q(p)$ and similarly for the functions $q_-$ and $q_+$ from \thref{prop:q}. Recall that \thref{prop:q} gives that $v=q$ is a solution to the equation $(1-v)g(p,v) = 0$ which has solutions $\{1, q_-, q_+\}$. \thref{lem:g} \ref{q-<0} states that $q_- <0$. Consequently, $q$ must equal either $1$ or $q_+$. For $p \in [0,p_*)$, \thref{lem:g} \ref{q+>1} states that $q_+ >1$. We immediately deduce that $q =1$ for $p \in [0, p_*)$. Next, \thref{prop:clopen} states that $q<1$ for $p \in (p_*,1)$. As $q_- <0$, this leaves $q = q_+$ as the only possible value for $q$ for $p$ in the interval $(p_*,1)$. Lastly, \thref{lem:p*} implies that $q_+(p_*) = 1$, and thus $q(p_*)$, which must equal either $1$ or $q_+(p_*)$, also equals $1$. This gives the claimed value of $p_c$ and claimed formula for $q(p)$. 

\end{proof}


\bibliographystyle{amsplain}


\providecommand{\bysame}{\leavevmode\hbox to3em{\hrulefill}\thinspace}
\providecommand{\MR}{\relax\ifhmode\unskip\space\fi MR }
\providecommand{\MRhref}[2]{%
  \href{http://www.ams.org/mathscinet-getitem?mr=#1}{#2}
}
\providecommand{\href}[2]{#2}

\begin{acks}
This research project was partially completed during the 2020 Baruch Discrete Mathematics REU, supported by NSF awards DMS-1802059, DMS-1851420, and DMS-1953141.
\end{acks}


\end{document}